\providecommand{\U}[1]{\protect\rule{.1in}{.1in}}
\def\CV{\operatorname{\rm CV}}
\newtheorem{theorem}{Theorem}
\newtheorem{corollary}[theorem]{Corollary}
\newtheorem{definition}[theorem]{Definition}
\newtheorem{lemma}[theorem]{Lemma}
\newtheorem{proposition}[theorem]{Proposition}
\newtheorem{remark}[theorem]{Remark}
\newenvironment{proof}[1][Proof]{\noindent\textbf{#1.} }{\ \rule{0.5em}{0.5em}}
\newcommand{\N}{\mathbb{N}}
\newcommand{\R}{\mathbb{R}}
\def\N{\mathbb{N}}
\def\R{\mathbb{R}}
\def\P{\mathbb{P}}
\def\E{\mathbb{E}}
\def\U{\mathbb{U}}
\begin{document}

\title{ Penalization for a PDE with a Nonlinear Neumann boundary condition and  measurable coefficients  
	 \thanks{This work is supported by PHC Toubkal 18/59} }
\author{Khaled Bahlali\thanks{Université de Toulon, IMATH, EA $2134$, $83957$ La Garde
cedex, France.}
\and Brahim Boufoussi \thanks{LIBMA, Department of Mathematics, Faculty of Sciences Semlalia, Cadi Ayyad University, 2390 Marrakesh, Morocco}
\and Soufiane Mouchtabih \footnotemark[2] \footnotemark[3] }
\date{}
\maketitle

\footnotetext{{E-mail addresses: khaled.bahlali@univ-tln.fr (Khaled Bahlali), boufoussi@uca.ma (Brahim Boufoussi), soufiane.mouchtabih@gmail.com (Soufiane Mouchtabih)}}
\maketitle



\noindent\textbf{Abstract}: We consider a system of semi-linear partial differential equations with measurable coefficients and a nonlinear Neumann boundary condition. We 
then construct a sequence of penalized partial differential equations which converges to a solution of our initial problem.   The solution we construct is in the $L^p-$viscosity sense, since the coefficients can be not continuous. 
The method we use is based on  backward stochastic differential equations and their  S-tightness. 
The present work is motivated by the fact that many partial differential equations arising in physics have discontinuous coefficients.   \\
\noindent\textbf{Keywords:} Reflected diffusion, Penalization method, Weak solution,  S-topology, Backward stochastic differential equations, $L^p-$viscosity solution for PDEs.

\noindent\textbf{AMS Subject Classification 2010: 60H99; 60H30; 35K61}.
\section{Introduction}

Let $D$ be a $\mathcal{C}^2$ convex, open and bounded domain in $\mathbb{R}^d$, and for $(t,x)\in [0,T]\times\bar{D}$ we consider the following reflecting stochastic differential equation
\begin{equation*}\label{refsde}
X_s=x+\int_t^sb(X_r)dr+\int_t^s\sigma(X_r)dW_r+K_s,\quad s\in[t,T],
\end{equation*}
where $b :\mathbb{R}^d\to \mathbb{R}^d$, $\sigma:\mathbb{R}^d\to \mathbb{R}^{d\times d'}$ are given measurable functions and $K$ is a bounded variation process satisfying some minimality conditions.
Several authors have studied approximations of reflected
diffusions in such domains. We refer for example to \cite{Me} and \cite{tanaka}
in the case of a convex bounded domain $ D $ and with coefficients satisfying Lipschitz conditions.
The non-convex case was treated in \cite{LiSz} then  
extended to reflected diffusions on non necessary 
bounded domains in \cite{PW}.
A general situation of non Lipschitz coefficients and non convex domain can be found in \cite{RS1}, where the authors studied, in particular, the existence of a weak solution of the reflected equation, when  the coefficients are merely measurable and the diffusion coefficient may degenerate on some subset of the domain. Note that equation $\eqref{refsde}$ can be used to handle linear PDEs with Neumann Boundary conditions, see for instance \cite{Hsu, SatoUeno, StrVara}. 

Our aim in the present work is to construct the solution of a system of semi-linear partial differential equations (PDEs), with a nonlinear Neumann boundary condition by penalization. For this purpose, we use the backward stochastic differential equations.  This allows us to 
 provide probabilistic representations for solutions of different type of semilinear PDEs, see  for instance \cite{PardPen} for parabolic equations, \cite{DarPard} for elliptic equations with Dirichlet boundary condition and \cite{PardZhang}  for a nonlinear Neumann boundary condition. More references can be found in \cite{PardRas}. 
 
 The penalization of nonlinear Neumann boundary problem \eqref{limitsytem} has been firstly considered in \cite{BouCas} when the coefficients  $b$, $\sigma$ are uniformly Lipschitz then extended by \cite{BahMatZal} to the case where the coefficients $b$, $\sigma$ are continuous. The main goal of the present paper is to extend the results of  \cite{BahMatZal}, \cite{BouCas} to the situation where the coefficients $b$ and $\sigma$ are merely measurable and the nonlinearity $f$ is measurable in $x$.. Our work is motivated by the fact that in many problems arising in physics. Our method is inspired from  that developed in \cite{BahMatZal,BouCas}. The difficulty in our situation is due to the discontinuity of the coefficients which makes the convergence of the sequence of penalized equations more delicate. Moreover, due to the non continuity of the coefficients, the classical viscosity solution, which is used in \cite{BahMatZal, BouCas, PardZhang}, can not be defined for our PDEs. We therefore use the notion of $L^p$-viscosity solution introduced in \cite{caf et al} for which we give here a probabilistic interpretation. More details on this topic can be found in \cite{caf et al} and \cite{CranKoc}. 

To describe our result, we shall recall some notations which will be used in the sequel.
 We assume that there exists a function $l\in\mathcal{C}^2_b(\mathbb{R}^d)$ such that
$$D=\{x\in \mathbb{R}^d:l(x)>0\},\quad \partial D=\{x\in \mathbb{R}^d: l(x)=0\},$$
and for all $x\in \partial D$, $\nabla l(x)$ is the unit normal pointing toward the interior
of $ D$. In order to define the approximation procedure we consider the application $x\mapsto dist^2(x,\bar{D})$, therefore, this function is $\mathcal{C}^1$ and convex on $\mathbb{R}^d$. On the other hand we can choose $l$ such that
$$<\nabla l(x),\delta(x)>\le 0,\quad \forall x\in \mathbb{R}^d,$$
where $\delta(x):=\nabla\left(  dist^2(x,\bar{D})\right) $ is called the penalization term. We have
$$\frac{1}{2}\delta(x)=\frac{1}{2}\nabla \left( dist^2(x,\bar{D})\right)=x-\pi_{\bar{D}}(x),\quad \forall x\in\mathbb{R}^d $$
where $\pi_{\bar{D}}$ is the projection operator. Moreover, $\delta$ is a Lipschitz function and we have
\begin{equation}\label{projection pro}
	<z-x,\delta(x)>\le 0,\quad \forall x\in \mathbb{R}^d,\quad\forall z\in \bar{D}.
\end{equation}

We consider the following sequence of semi-linear partial
differential equations ($1\leq i\leq k$, $0\leq t\leq T$, $x\in\R^{d}$,
$n\in\N$).
\begin{equation}\label{pensystem}
\left\{
\begin{aligned}
\displaystyle
&\frac{\partial u^{n}_{i}}{\partial t}(t,x) +\mathcal{L}\, u^{n}_{i}(t,x)+
f_{i}(t,x,u^{n}(t,x))\\
&\qquad -n<\delta(x),\nabla
u^{n}_{i}(t,x)>-n<\delta(x),\nabla l(x)
> h_{i}(t, x, u^n(t,x)) = 0\,; \\
&u^{n}(T, x)  =  g(x)\, .\hfill\cr
\end{aligned}\right.
\end{equation}
where $\mathcal{L}$ is the infinitesimal generator corresponding to the diffusion part of $ X $, that is
\begin{equation*}
\mathcal{L}=\frac{1}{2}\sum_{i,j}\left( \sigma\sigma^*(.)\right)_{ij}\frac{\partial^2}{\partial x_i \partial x_j}+\sum_{i}b_i(.)\frac{\partial}{\partial x_i}.
\end{equation*}
 that $(t,x)$ belongs to
$[0,T]\times{\bar D}$.
Under suitable assumptions on the coefficients $ f $, $ g $ and $ h $, by the mean of the connection between BSDEs and semi-linear PDEs, we prove that the sequence $u^{n}(t,x)$ converges, as $ n $ goes to infinity, to a function $u(t,x)$, which is the
solution in the $L^p-$viscosity sense, of the following PDE with Neumann
boundary condition:
\begin{equation}\label{limitsytem}
\left\{
\begin{matrix}
\displaystyle\frac{\partial u_{i}}{\partial t}(t,x) +\mathcal{L}u_{i}(t,x)+
f_{i}(t,x,u(t,x))  = 0\,,\  1\leq i\leq k\,,\,\,(t,x)\in[0,T)\times D\ ,
\hfill\\
\noalign{\medskip}
u_i(T,x)  =  g_i(x)\,,\ \  x\in D\hfill\\
\displaystyle\frac{\partial u_i}{\partial n}(t,x)+h_i(t, x, u(t,x))=0
\,,\,\,\forall
(t,x)\in[0,T)\times\partial D\,.\hfill
\end{matrix}\right.
\end{equation}
where $\frac{\partial u}{\partial n}$ is the outward normal derivative of $u$ on the boundary of the domain and $\frac{\partial u_i}{\partial n}(t,x)=<\nabla l(x),\nabla u_i(t,x)>$ for all $x\in \partial D$. It turns out that, even when  the coefficients are merely measurable,  the convergence of $u^n$ to $u$ follows from the uniqueness in law of the forward part.

Throughout the paper, $\mathcal{C}([0,T],\mathbb{R}^d)$ is the space of $\mathbb{R}^d$-valued continuous function, $\mathcal{D}([0,T],\mathbb{R}^d)$ is the space of $\mathbb{R}^d$-valued cadlag functions and $W^{1,2}_{p,loc}\left( [0,T]\times\mathbb{R}^d\right)$ is the classical Sobolev space of functions $\varphi$ with values in $\mathbb{R}$ such that both $\varphi$  and all the generalized derivatives $\partial_t\varphi$, $\partial_x\varphi$ and $\partial^2_{xx}\varphi$ belong to $L^p_{loc}([0,T]\times\mathbb{R}^d)$. Furthermore,
 for a sequence of processes $(Y^n)_n$, $Y^n\xrightarrow[U]{*} Y$ will denotes the convergence in law with respect to the uniform topology and $Y^n\xrightarrow[S]{*} Y$ is the weak convergence with respect to the $S$-topology. See Appendix for a brief presentation of this topology and \cite{Jak} for more details.

The paper is outlined as follows, in Section 2 we prove the convergence of solutions of our reflected SDE as well as our penalized SDE. The continuity of the solution with respect to the initial data is also established for both penalized and reflected SDEs. In Section 3, the same properties are established for the solutions of the BSDEs parts which is our first main result.  Section 4 gives  the  application  to PDEs with nonlinear Neumann boundary condition which is the second main result of this paper. 

\section{Reflected stochastic differential equations}
Throughout the paper $T$ is a fixed strictly positive number and $d, d' \in \mathbb{N}^*$.
Consider a stochastic differential equation with
reflecting boundary condition of the form

\begin{equation}\label{rsde2}
	\left\{
	\begin{aligned}
		&X^{t,x}_{s}=x+\int_{t}^{s} b(X^{t,x}_{r})\, dr+\int_{t}^{s}\sigma(X^{t,x}_{r})\, dW_{r}+K^{t,x}_{s}, \\
		&K^{t,x}_s=\int_t^s\nabla l(X^{t,x}_r)d|K^{t,x}|_{[t,r]},\\
		&|K^{t,x}|_{[t,s]}=\int_t^s1_{\{X^{t,x}_r\in \partial D\}}d|K^{t,x}|_{[t,r]},
	\end{aligned}
	\right.
\end{equation}
 where $t\in [0,T]$, $s\in [t,T]$ and the notation $|K^{t,x}|_{[t,s]}$ stands for the total variation of $K^{t,x}$ on the interval $[t,s]$, we will denote this continuous increasing process by $k^{t,x}_s$. In particular we have
 \begin{equation}
 k^{t,x}_s=\int_t^s<\nabla l(X^{t,x}_r),dK^{t,x}_r>.
 \end{equation}
We say that $(\Omega,\mathcal{F},\mathbb{P},\{\mathcal{F}_s\},W,X,K)$ is a weak solution
 of (\ref{rsde2}) if $(\Omega,\mathcal{F},\mathbb{P},\{\mathcal{F}_s\})$ is a stochastic basis, $W$ is a $d'-$dimensional Brownian motion with respect to this basis, $X$ is a continuous adapted process and $K$ is a continuous bounded variation process such that $X_s\in \bar{D}$ $\mathbb{P}-$a.s, $\forall s\in [t,T]$ and $(X,K)$ satisfies System $(\ref{rsde2})$.

We suppose the following assumptions
\begin{itemize}
	\item[$(A.1)$] $b:\mathbb{R}^d\to\mathbb{R}^d$ and $\sigma:\mathbb{R}^d\to \mathbb{R}^{d\times d'}$ are measurable bounded functions,
	\item[$(A.2)$] There exists $\alpha>0$ such that for all $x\in \mathbb{R}^d$ $\sigma\sigma^{*}(x)\geq \alpha\,I ,$
	\item[$(A.3)$] The weak uniqueness holds for Equation (\ref{rsde2}).
\end{itemize}

The reflecting diffusions with measurable coefficients were considered in  \cite{RS1} and \cite{Slo2013} where the authors have proved some approximations, stability and existence results. It should be pointed out that in the case of no continuity of coefficients the uniqueness generally failed. Since the weak uniqueness is crucial to prove our main result, we assume that the weak uniqueness holds for Equation $(\ref{rsde2})$ i.e assumption $(A.3)$.

\begin{remark}
	We assume that one of the following sets of assumptions is satisfied
	\begin{enumerate}
		\item $D$ is a semicompact, the dimension $d\le 2$ and assumptions $(A.1)-(A.2)$ hold. 
		\item $b$ is measurable bounded, $\sigma$ is continuous bounded and $\sigma \sigma^*$ is uniformly nondegenerate.
	\end{enumerate}
Then the weak uniqueness holds for equation $(\ref{rsde2})$. 

Indeed, let $\big(\Omega,\mathcal{F},\mathbb{P},\left\{  \mathcal{F}_{t}\right\}
_{t\geq0},W,X,K\big)$ be a weak solution of (\ref{rsde2}) and $f\in
C^{1,2}\left(  \left[  0,T\right]  \times\bar{D}\right)  $. Applying Itô's
formula to $f\left(  s,X_{s}\right)  $:%
\begin{equation}%
	\begin{array}
		[c]{l}%
		f\left(  s,X_{s}\right)  =f\left(  t,x\right)  +{\displaystyle\int_{t}%
			^{s}\Big(}\frac{\partial f}{\partial r}+\mathcal{L}f{\Big)}\left(
		r,X_{r}\right)  dr+{\displaystyle\int_{t}^{s}}\left\langle \nabla_{x}f\left(
		r,X_{r}\right)  ,\nabla\ell\left(  X_{r}\right)  \right\rangle dk_{r}%
		\medskip\\
		\quad\quad\quad\quad\quad+{\displaystyle\int_{t}^{s}}\left\langle \nabla
		_{x}f\left(  r,X_{r}\right)  ,\sigma\left(  X_{r}\right)  dW_{r}\right\rangle
		.
	\end{array}\label{Ito uniqueness}
\end{equation}
Since $\sigma\sigma^{\ast}$ is nondegenerate, we use 
Krylov's inequality for reflecting diffusions (see Theorem 5.1 in \cite{LauSlo}) to get for every  $s\in\left[ t,T\right]  ,$%
\[%
\begin{array}
[c]{l}%
\mathbb{E}\displaystyle{\int_{t}^{s}\Big|\Big(}\frac{\partial f}{\partial
	r}+\mathcal{L}f{\Big)}\left(  r,X_{r}\right)  {\Big|}1_{\left\{
	X_{r}\in\partial D\right\}  }dr\medskip\\
\quad\leq C~\left(  \displaystyle\int_{t}^{s}\int_{D}\det\left(  \sigma
\sigma^{\ast}\right)  ^{-1}{\Big(}\frac{\partial f}{\partial r}+\mathcal{L}%
f{\Big)}^{d+1}1_{\left\{  \partial D\right\}  }~{dsdx}\right)
^{\frac{1}{d+1}}=0~{.}%
\end{array}
\]
Thus, equality (\ref{Ito uniqueness}) becomes%
\[%
\begin{array}
[c]{l}%
f\left(  s,X_{s}\right)  =f\left(  t,x\right)  +{\displaystyle\int_{t}%
	^{s}\Big(}\frac{\partial f}{\partial r}+\mathcal{L}f{\Big)}\left(
r,X_{r}\right)1_{\left\{  X_{r}\in D\right\}  }%
dr+{\displaystyle\int_{t}^{s}}\left\langle \nabla_{x}f\left(  r,X_{r}\right)
,\nabla\ell\left(  X_{r}\right)  \right\rangle dk_{r}\medskip\\
\quad\quad\quad\quad+{\displaystyle\int_{t}^{s}}\left\langle \nabla
_{x}f\left(  r,X_{r}\right)  ,\sigma\left(  X_{r}\right)  dW_{r}\right\rangle
~,\;\mathbb{P}\text{-a.s.}%
\end{array}
\]
Therefore
\[
f\left(  s,X_{s}\right)  -f\left(  t,x\right)  -{\displaystyle\int_{t}%
	^{s}\Big(}\frac{\partial f}{\partial r}+\mathcal{L}f{\Big)}\left(
r,X_{r}\right)1_{\left\{  X_{r}\in D\right\}  }dr
\]
is a $\mathbb{P}$-submartingale whenever $f\in C^{1,2}\left(  \left[
0,T\right]  \times\bar{D}\right)  $ satisfies%
\[
\left\langle \nabla_{x}f\left(  s,x\right)  ,\nabla\ell\left(  x\right)
\right\rangle \geq0,\forall x\in\partial D.
\]
Under the first set of assumptions we deduce from Theorem 3 in \cite{Kry1} that the process $(X_s)_{s\in[t,T]}$ is unique in law. Under the second set of assumptions  we apply Theorem 5.7 in \cite{StrVara} with $\phi=l$, $\gamma:=\nabla\phi$ and
$\rho:=0$ we obtain that the solution to the submartingale problem is unique
for each starting point $\left( t,x\right)  $, therefore our solution process
$\left(  X_{s}\right)  _{s\in\left[  t,T\right]  }$ is unique in law. Moreover, the uniqueness in law of the couple $(X,K)$ follows from Theorem 6 in \cite{El karoui 2}.  	
\end{remark}

We consider the penalized SDEs related to our reflected diffusion $X^{t,x}$
\begin{equation}\label{PenaSDE}
X^{t,x,n}_s=x+\int_t^s\left[b(X^{t,x,n}_r)-n\delta(X^{t,x,n}_r) \right]dr+\int_t^s\sigma(X^{t,x,n}_r)dW_r,\quad s\in [t,T].
\end{equation}
For $n\in \mathbb{N}$ fixed, under assumptions $(A.1)$ and $(A.2)$, we can deduce from Krylov's works, see \cite{Kry2} and the references therein, that there exists a weak solution of Equation (\ref{PenaSDE}). Moreover, Krylov have also established that it is  possible to select a strong Markov weak solution of Equation (\ref{PenaSDE}). In the sequel we shall need to show the continuity of the flow associated to this equation, for this goal we suppose the following assumption
\begin{itemize}
	\item[$(A.4)$] The weak uniqueness holds for Equation (\ref{PenaSDE}).
\end{itemize}
 \begin{remark}
 We note that in the case of low dimension, $d\le 2$, and assumptions $(A.1)-(A.2)$ are in force, the  assumption $(A.4)$ holds true, see $\cite{Kry1}$ and $\cite{Kry 3}$.
 \end{remark}
We set for all $t\in [0,T]$
\begin{eqnarray*}
K^{t,x,n}_s:=\int_t^s-n\delta(X^{t,x,n}_r)dr\quad\mbox{and}\quad
k^{t,x,n}_s:=\int_t^s<\nabla l(X^{t,x,n}_r),dK^{t,x,n}_r>,\quad \forall s\in [t,T].
\end{eqnarray*}
We recall the following classical boundedness result, (see \cite{BahMatZal}), we have
\begin{equation}\label{boundXn}
\sup_{n\ge 0}\,\mathbb{E}\sup_{s\in[t,T]}|X^{t,x,n}_s|^{2q}+\sup_{n\ge 0}\mathbb{E}|K^{t,x,n}|^q_{[t,T]}<+\infty,\quad \forall q\ge 1.
\end{equation}
The next proposition shows a convergence result of the penalized equation (\ref{PenaSDE}).
\begin{proposition}
	Under the assumptions $(A.1)-(A.3)$. We have:
 $$(X^{t,x,n},K^{t,x,n})\xrightarrow[U\times U]{*} (X^{t,x},K^{t,x}).$$
 Moreover $(X^{t,x},K^{t,x})$ satisfies system $(\ref{rsde2})$.
\end{proposition}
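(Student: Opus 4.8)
The plan is to argue by compactness in three steps: prove relative compactness of the laws of the penalized pairs, identify every limit point as a weak solution of \eqref{rsde2}, and then invoke the weak uniqueness $(A.3)$ to force convergence of the whole sequence. For the tightness step, set $B^n_s=\int_t^s b(X^{t,x,n}_r)\,dr$ and $M^n_s=\int_t^s\sigma(X^{t,x,n}_r)\,dW_r$. Since $b$ and $\sigma$ are bounded by $(A.1)$, the Burkholder--Davis--Gundy inequality gives uniform bounds on the moments of the increments of $B^n$ and $M^n$, so both families are tight in the uniform topology of $\mathcal{C}([0,T],\mathbb{R}^d)$. For the reflection term $K^{t,x,n}_s=-\int_t^s n\delta(X^{t,x,n}_r)\,dr$ I would invoke $\sup_n\mathbb{E}\,|K^{t,x,n}|^{q}_{[t,T]}<\infty$ together with $\sup_n\mathbb{E}\sup_s|X^{t,x,n}_s|^{2q}<\infty$ from \eqref{boundXn}: as $K^{t,x,n}$ is continuous of bounded variation with uniformly integrable total variation, Jakubowski's criterion yields $S$-tightness of $(K^{t,x,n})_n$, and hence joint tightness of $\big(X^{t,x,n},K^{t,x,n},W,M^n\big)_n$.

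For the identification step, which I expect to be the main obstacle, I would pass along a convergent subsequence to a limit $(\tilde X,\tilde K,\tilde W)$, realized on a single probability space by a Skorokhod representation. Applying It\^o's formula to the convex map $x\mapsto \mathrm{dist}^2(x,\bar D)$, whose gradient is $\delta$, the sign of the penalization produces the term $-\,n\,|\delta(X^{t,x,n}_r)|^2$, whence $\mathbb{E}\int_t^T|\delta(X^{t,x,n}_r)|^2\,dr\to 0$; passing to the limit gives $\delta(\tilde X_r)=0$, i.e. $\tilde X_r\in\bar D$ for every $r$. The delicate point is to pass to the limit in $B^n$ and $M^n$ when $b,\sigma$ are only measurable: rather than arguing pathwise, I would identify $\tilde X$ through the submartingale problem, as in the Remark, testing against $f\in C^{1,2}([0,T]\times\bar D)$ with $\langle\nabla_x f,\nabla l\rangle\ge 0$ on $\partial D$. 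Here the nondegeneracy $(A.2)$ and the Krylov estimate for reflecting diffusions (Theorem 5.1 in \cite{LauSlo}) are essential: they ensure that the occupation measures are absolutely continuous, so the discontinuity set of $b$ and $\sigma$ carries no mass in the limit, which legitimizes the identification of $\mathcal{L}$ and thus of the limiting dynamics.

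It then remains to identify $\tilde K$ as the reflection term and to conclude. Since $dK^{t,x,n}_r=-n\delta(X^{t,x,n}_r)\,dr$ and $\langle\nabla l(x),\delta(x)\rangle\le 0$, the variation processes $k^{t,x,n}$ are increasing with uniformly bounded expectation; letting $n\to\infty$ and using the projection inequality \eqref{projection pro} together with $\delta=0$ on $\bar D$, I would show that $\tilde K$ increases only on $\{\tilde X\in\partial D\}$ and is directed along $\nabla l(\tilde X)$, that is $\tilde K_s=\int_t^s\nabla l(\tilde X_r)\,d|\tilde K|_{[t,r]}$ with $|\tilde K|_{[t,s]}=\int_t^s 1_{\{\tilde X_r\in\partial D\}}\,d|\tilde K|_{[t,r]}$. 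Hence $(\tilde X,\tilde K)$ is a weak solution of \eqref{rsde2}, and by $(A.3)$ its law is that of $(X^{t,x},K^{t,x})$. As this holds for every subsequential limit, the full sequence converges; finally, the continuity of $\tilde K$ lets one strengthen the $S$-convergence to uniform convergence (using that $B^n,M^n$ already converge uniformly and $X^{t,x,n}=x+B^n+M^n+K^{t,x,n}$), which gives $(X^{t,x,n},K^{t,x,n})\xrightarrow[U\times U]{*}(X^{t,x},K^{t,x})$.
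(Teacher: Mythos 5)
The paper's own proof is a two-line appeal to Theorem 2.1 of \cite{Slo2013}, which asserts precisely that the penalized pair converges to a (weak) solution of \eqref{rsde2}, followed by the observation that weak uniqueness $(A.3)$ identifies the limit. Your proposal instead reconstructs that theorem from scratch: tightness, identification of the limit dynamics via the submartingale problem and Krylov estimates, identification of the reflection term via the convexity inequality \eqref{projection pro}, and finally $(A.3)$ to pass from subsequential to full-sequence convergence. The architecture is the right one --- it is essentially the proof that sits behind the cited theorem --- and your closing step (every subsequential limit has the law of $(X^{t,x},K^{t,x})$, hence the whole sequence converges) matches the paper's use of $(A.3)$ exactly.

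There is, however, one genuine soft spot. The proposition claims convergence in the uniform topology in both coordinates, but your tightness argument only delivers $S$-tightness of $(K^{t,x,n})_n$ from the uniform bound on total variations in \eqref{boundXn}; $S$-convergence to a continuous limit does not imply uniform convergence, and the decomposition $X^{t,x,n}=x+B^n+M^n+K^{t,x,n}$ does not rescue you, because $\mathcal{C}$-tightness of $X^{t,x,n}$ is exactly as unproven as that of $K^{t,x,n}$ (the drift $-n\delta(X^{t,x,n})$ is not bounded uniformly in $n$, so Kolmogorov's criterion does not apply directly to either). The missing ingredient is a uniform-in-$n$ increment estimate of the type $\mathbb{E}\,|X^{t,x,n}_s-X^{t,x,n}_r|^{p}\le C\,|s-r|^{p/2}$, obtained by exploiting the convexity of $D$ through \eqref{projection pro}; this is where the real work in \cite{Slo2013} (and earlier in Menaldi, Tanaka, Lions--Sznitman) is done, and it is the step your sketch omits. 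A smaller point: $x\mapsto \mathrm{dist}^2(x,\bar{D})$ is only $\mathcal{C}^{1,1}$, so the It\^o expansion you use to obtain $n\,\mathbb{E}\int_t^T|\delta(X^{t,x,n}_r)|^2\,dr\le C$ requires It\^o--Krylov (available here thanks to $(A.2)$) rather than the classical formula. With those two repairs the argument closes and recovers the content of the cited theorem.
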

\begin{proof}
	By Theorem 2.1 in \cite{Slo2013}, the process $(X^{t,x,n},K^{t,x,n})$  converges to a solution of Equation $(\ref{rsde2})$. Hence the weak uniqueness gives the result.
\end{proof}

\begin{remark}
Under assumptions $(A.1)-(A.3)$, by virtue of Theorems 7 and 10 in $\cite{El karoui 2}$, the solution $X^{t,x}$ of Equation $(\ref{rsde2})$ is a Markov process.
\end{remark}
We extend  the processes $(X^{t,x},K^{t,x})$ and $(X^{t,x,n},K^{t,x,n})$ to $[0,t]$ by denoting
\begin{equation*}
X^{t,x}_s=X^{t,x,n}_s:=x,\quad K^{t,x}_s=K^{t,x,n}_s:=0,\quad \forall\, s\, \in\,[0,t].
\end{equation*}

Now, by using It\^o's formula, the boundedness of $b$, $\sigma$ and $D$, we obtain a priori estimations for the solutions of (\ref{rsde2}) .
\begin{proposition} Under assumption $(A.1)$.
	We have for all $q\ge 1$
	\begin{equation}
	\sup_{n\ge 1}\mathbb{E}\sup_{s\in [0,T]}|X^{t_n,x_n}_s|^{2q}<\infty,\quad \sup_{n\ge 1}\mathbb{E}|K^{t_n,x_n}|_T^q<\infty.
	\end{equation}
\end{proposition}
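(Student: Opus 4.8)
The plan is to treat the two bounds separately, since the first is essentially free while all the work lies in the second. For the supremum bound on $X$, recall that by the very definition of a weak solution of \eqref{rsde2} the process stays in $\bar D$, i.e. $X^{t_n,x_n}_s\in\bar D$ for all $s\in[t_n,T]$, while on $[0,t_n]$ it is frozen at $x_n\in\bar D$. Since $D$ is bounded, $\bar D$ is compact, so $\sup_{s\in[0,T]}|X^{t_n,x_n}_s|\le R:=\sup_{y\in\bar D}|y|<\infty$ pathwise, with $R$ independent of $n$ and of $(t_n,x_n)$. Hence $\sup_{n}\mathbb{E}\sup_{s\in[0,T]}|X^{t_n,x_n}_s|^{2q}\le R^{2q}$, which settles the first estimate.

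For the variation term I would extract $k^{t_n,x_n}_T=|K^{t_n,x_n}|_{[t_n,T]}$ from an It\^o expansion of the boundary function $l$. Applying It\^o's formula to $l(X^{t_n,x_n}_s)$ (legitimate since $l\in\mathcal{C}^2_b$) gives
\begin{equation*}
l(X^{t_n,x_n}_s)=l(x_n)+\int_{t_n}^s\mathcal{L}l(X^{t_n,x_n}_r)\,dr+\int_{t_n}^s\langle\nabla l(X^{t_n,x_n}_r),\sigma(X^{t_n,x_n}_r)\,dW_r\rangle+\int_{t_n}^s\langle\nabla l(X^{t_n,x_n}_r),dK^{t_n,x_n}_r\rangle.
\end{equation*}
The key observation is that the last term equals $k^{t_n,x_n}_s$: indeed $dK^{t_n,x_n}_r=\nabla l(X^{t_n,x_n}_r)\,dk^{t_n,x_n}_r$, and $dk^{t_n,x_n}_r$ is carried by $\{X^{t_n,x_n}_r\in\partial D\}$, where $\nabla l$ is the unit inward normal so that $|\nabla l|^2=1$. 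Solving for $k^{t_n,x_n}_T$ therefore yields
\begin{equation*}
k^{t_n,x_n}_T=l(X^{t_n,x_n}_T)-l(x_n)-\int_{t_n}^T\mathcal{L}l(X^{t_n,x_n}_r)\,dr-\int_{t_n}^T\langle\nabla l(X^{t_n,x_n}_r),\sigma(X^{t_n,x_n}_r)\,dW_r\rangle.
\end{equation*}

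Now every ingredient on the right is controlled by a constant independent of $n$: $l$ is bounded; $\mathcal{L}l=\tfrac12\sum_{ij}(\sigma\sigma^*)_{ij}\partial^2_{ij}l+\sum_i b_i\partial_i l$ is bounded because $b,\sigma$ are bounded by $(A.1)$ and $l\in\mathcal{C}^2_b$; and the integrand of the stochastic integral is bounded by $\|\sigma\|_\infty\|\nabla l\|_\infty$. Taking $|\cdot|^q$, using $(a+b+c+d)^q\le C_q(a^q+\cdots+d^q)$ and then expectations, the only nonelementary term is the $q$-th moment of the martingale $M_s:=\int_{t_n}^s\langle\nabla l(X^{t_n,x_n}_r),\sigma(X^{t_n,x_n}_r)\,dW_r\rangle$, which I would dominate by the Burkholder--Davis--Gundy inequality:
\begin{equation*}
\mathbb{E}|M_T|^q\le\mathbb{E}\sup_{s\in[t_n,T]}|M_s|^q\le C_q\,\mathbb{E}\Big(\int_{t_n}^T|\sigma^*(X^{t_n,x_n}_r)\nabla l(X^{t_n,x_n}_r)|^2\,dr\Big)^{q/2}\le C_q(\|\sigma\|_\infty\|\nabla l\|_\infty)^q\,T^{q/2}.
\end{equation*}
Collecting the bounds gives $\mathbb{E}|K^{t_n,x_n}|_T^q=\mathbb{E}(k^{t_n,x_n}_T)^q\le C(q,T,\|l\|_{\mathcal{C}^2_b},\|b\|_\infty,\|\sigma\|_\infty)$, a bound free of $n$, so the supremum over $n$ is finite.

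I do not anticipate a serious obstacle: this is a standard a priori estimate. The only points requiring care are the identification of the reflection term in the It\^o formula as exactly $k^{t_n,x_n}_s$, which relies on $dk$ being supported on $\partial D$ together with $|\nabla l|=1$ there, and the verification that each constant is genuinely independent of the initial datum $(t_n,x_n)$ --- which holds precisely because $\bar D$ and the coefficients are bounded.
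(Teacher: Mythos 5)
Your argument is correct and is precisely the route the paper intends: the statement is given there without a written proof beyond the remark that it follows ``by using It\^o's formula, the boundedness of $b$, $\sigma$ and $D$,'' and your write-up fills in exactly those steps --- the pathwise bound $X^{t_n,x_n}_s\in\bar D$ for the first estimate, and It\^o applied to $l$ together with $dK_r=\nabla l(X_r)\,dk_r$, $|\nabla l|=1$ on $\partial D$, and Burkholder--Davis--Gundy for the second. The identification $\int_{t_n}^s\langle\nabla l(X_r),dK_r\rangle=k^{t_n,x_n}_s=|K^{t_n,x_n}|_{[t_n,s]}$ is consistent with the paper's own definition of $k^{t,x}$, so no gap remains.
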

We have the following continuity result with respect to the initial data for the solution of the penalized equations (\ref{PenaSDE}).
\begin{proposition} Under assumptions $(A.1)$, $(A.2)$ and $(A.4)$.
	The application  $[0,T]\times\mathbb{R}^d \ni (t,x)\to (X^{t,x,n},K^{t,x,n}) $
	is continuous in law.
\end{proposition}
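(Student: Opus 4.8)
The plan is to argue sequential continuity for the fixed penalization level $n$: given an arbitrary sequence $(t_m,x_m)\to(t,x)$ in $[0,T]\times\R^d$, I will show
$(X^{t_m,x_m,n},K^{t_m,x_m,n})\xrightarrow[U\times U]{*}(X^{t,x,n},K^{t,x,n})$. A useful preliminary remark is that $\delta$ is Lipschitz and $K^{t_m,x_m,n}_s=\int_{t_m}^s-n\,\delta(X^{t_m,x_m,n}_r)\,dr$, so the $K$-component is a Lipschitz (in particular continuous) functional of the $X$-path; the problem therefore reduces to controlling the diffusion component, the $K$-component being carried along automatically once uniform convergence of the $X$'s is established.

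First I would prove tightness of the laws of $\big(X^{t_m,x_m,n},K^{t_m,x_m,n},W\big)$ on $\mathcal{C}([0,T],\R^d)^2\times\mathcal{C}([0,T],\R^{d'})$ with the uniform topology. The uniform moment bounds of the preceding proposition, together with the boundedness of $b,\sigma$ and the Lipschitz bound on $\delta$, give uniform $L^{2q}$ control of the increments of $X^{t_m,x_m,n}$ and uniform Lipschitz bounds on $K^{t_m,x_m,n}$; the Kolmogorov--Chentsov criterion then yields tightness, the convergence $t_m\to t$, $x_m\to x$ causing no trouble after extending the processes by constants on $[0,t_m]$. From any subsequence I would extract a weakly convergent sub-subsequence and, by the Skorokhod representation theorem, realize it on an auxiliary space $(\tilde\Omega,\tilde{\mathcal F},\tilde{\P})$ as $(\tilde X^m,\tilde K^m,\tilde W^m)\to(\tilde X,\tilde K,\tilde W)$ converging $\tilde{\P}$-a.s.\ uniformly. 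The goal is then to identify $(\tilde\Omega,\tilde{\mathcal F},\tilde{\P},\tilde W,\tilde X,\tilde K)$ as a weak solution of (\ref{PenaSDE}) started from $(t,x)$,
\[
\tilde X_s=x+\int_t^s\big[b(\tilde X_r)-n\,\delta(\tilde X_r)\big]\,dr+\int_t^s\sigma(\tilde X_r)\,d\tilde W_r,
\]
the $-n\delta$ drift and the $K$-identity passing to the limit at once from the a.s.\ uniform convergence of $\tilde X^m$.

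The main obstacle is the passage to the limit in the genuinely discontinuous coefficients $b$ and $\sigma$: being only measurable, a.s.\ uniform convergence $\tilde X^m\to\tilde X$ does not entail $b(\tilde X^m)\to b(\tilde X)$. I would resolve this through Krylov's inequality for the nondegenerate diffusions $\tilde X^m$, valid uniformly in $m$ by $(A.2)$ and the uniform moment bounds: for every bounded measurable $\psi$,
\[
\sup_m\,\tilde{\E}\int_t^T|\psi(\tilde X^m_r)|\,dr\le C\,\|\psi\|_{L^{d+1}}.
\]
Approximating $b$ (and each entry of $\sigma$) in $L^{d+1}$ by continuous bounded $b_\varepsilon$ and splitting $\tilde{\E}\int_t^T|b(\tilde X^m_r)-b(\tilde X_r)|\,dr$ into three terms, the two Krylov-controlled terms are $O(\|b-b_\varepsilon\|_{L^{d+1}})$ uniformly in $m$, while $\tilde{\E}\int_t^T|b_\varepsilon(\tilde X^m_r)-b_\varepsilon(\tilde X_r)|\,dr\to0$ by continuity of $b_\varepsilon$ and dominated convergence; letting $\varepsilon\to0$ gives convergence of the drift integrals. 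An analogous argument, applied either to the quadratic variation or, more robustly, to the martingale problem associated with the generator $\mathcal{L}-n\,\delta\cdot\nabla$, handles the stochastic integral $\int_t^\cdot\sigma(\tilde X^m_r)\,d\tilde W^m_r$ and identifies the limit as a weak solution of (\ref{PenaSDE}).

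Finally, assumption $(A.4)$ forces the law of the limit to coincide with that of $(X^{t,x,n},K^{t,x,n})$. Since every subsequence of $\{(X^{t_m,x_m,n},K^{t_m,x_m,n})\}_m$ therefore admits a further subsequence converging in law to this same limit, the whole sequence converges, which is exactly the asserted continuity in law.
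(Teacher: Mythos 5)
Your argument is correct and is essentially the paper's own proof written out in full: the paper simply invokes the stability result of Rozkosz--Slominski (Corollary 2 in \cite{AndSlo}, see also \cite{Kry2}) — which is exactly your tightness / Skorokhod representation / Krylov-estimate identification scheme — and then uses the weak uniqueness $(A.4)$ to pass from subsequential to full convergence in law, with the $K$-component deduced from the $X$-component just as you note. No genuine differences or gaps.
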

\begin{proof}
	Let $(t_m,x_m)\to (t,x)$, arguing as in Corollary 2 in \cite{AndSlo}, (see also \cite{Kry2}), using the weak uniqueness we find
	\begin{equation*}
	X^{t_m,x_m,n}\xrightarrow[U ]{\ *\ } X^{t,x,n},
	\end{equation*}
	and we deduce that
	\begin{equation*}
	K^{t_m,x_m,n}\xrightarrow[U ]{\ *\ } K^{t,x,n}.
	\end{equation*}
This ends the proof.
\end{proof}

We now state a continuity in law with respect to the initial data for the solution of equation $(\ref{rsde2})$, which is a slight generalization of Lemma 3.8 in \cite{BahMatZal}.
\begin{proposition} We suppose that $(A.1)-(A.3)$ are in force. Then
	the map $[0,T]\times\bar{D} \ni (t,x)\to (X^{t,x},K^{t,x})$
	is continuous in law.
\end{proposition}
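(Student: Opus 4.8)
The plan is to follow the scheme of Lemma 3.8 in \cite{BahMatZal}: fix an arbitrary sequence $(t_m,x_m)\to(t,x)$ in $[0,T]\times\bar{D}$ and show that $(X^{t_m,x_m},K^{t_m,x_m})\xrightarrow[U\times U]{*}(X^{t,x},K^{t,x})$. By the subsequence principle, it suffices to prove that every subsequence admits a further subsequence converging in law to $(X^{t,x},K^{t,x})$, so the argument splits into three parts: tightness, identification of the limit as a weak solution of \eqref{rsde2} with initial datum $(t,x)$, and an appeal to weak uniqueness $(A.3)$ to conclude. Note that, in contrast with the previous proposition for the penalized flow, this route does not require $(A.4)$.

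First I would establish tightness of the laws of $(X^{t_m,x_m},K^{t_m,x_m})_m$ in $\mathcal{C}([0,T],\mathbb{R}^d)\times\mathcal{C}([0,T],\mathbb{R}^d)$. The a priori estimates of the preceding proposition give exactly $\sup_{m}\mathbb{E}\sup_{s\le T}|X^{t_m,x_m}_s|^{2q}<\infty$ and $\sup_{m}\mathbb{E}|K^{t_m,x_m}|_T^q<\infty$. Writing $X^{t_m,x_m}$ through its defining equation \eqref{rsde2}, the boundedness of $b$ and $\sigma$ controls the drift and the Brownian part, while the increments of $K^{t_m,x_m}$ are dominated by those of the associated total-variation process $k^{t_m,x_m}$. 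I would therefore first prove tightness of the continuous increasing processes $k^{t_m,x_m}$, using their uniform $L^q$ bound together with a modulus-of-continuity estimate (obtained, as usual, by applying It\^o's formula to $l(X^{t_m,x_m})$ and exploiting that $dk^{t_m,x_m}$ is carried by $\partial D$), and then transfer tightness to $X^{t_m,x_m}$ and to $K^{t_m,x_m}$ via the identity $K^{t_m,x_m}_s=\int_{t_m}^s\nabla l(X^{t_m,x_m}_r)\,dk^{t_m,x_m}_r$ and the Lipschitz continuity of $\nabla l$.

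Next, passing to a convergent subsequence and invoking the Skorokhod representation theorem, I would realize the limit $(\hat X,\hat K)$ together with a limiting Brownian motion on a single probability space, with almost sure uniform convergence. Passing to the limit in the integral equation, using the boundedness of the coefficients and the moment bounds to justify convergence of the stochastic and bounded-variation integrals, shows that $(\hat X,\hat K)$ solves the forward equation started from $(t,x)$ and that $\hat X_s\in\bar{D}$. It then remains to recover the two minimality relations $\hat K_s=\int_t^s\nabla l(\hat X_r)\,d|\hat K|_{[t,r]}$ and $|\hat K|_{[t,s]}=\int_t^s 1_{\{\hat X_r\in\partial D\}}\,d|\hat K|_{[t,r]}$. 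Once these are verified, $(\hat X,\hat K)$ is a weak solution of \eqref{rsde2}, and weak uniqueness $(A.3)$ identifies it in law with $(X^{t,x},K^{t,x})$; since the limit is the same along every subsequence, the whole sequence converges and the map is continuous in law.

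The main obstacle is the identification of the reflecting structure of the limit, and specifically the boundary support condition $|\hat K|_{[t,s]}=\int_t^s 1_{\{\hat X_r\in\partial D\}}\,d|\hat K|_{[t,r]}$. Because $x\mapsto 1_{\{x\in\partial D\}}$ is discontinuous, this relation does not pass directly to the limit; I would handle it by replacing the indicator with the smooth function $l$, using that $l(\hat X_r)=0$ on the support of $d|\hat K|$, and exploiting the almost sure uniform convergence from Skorokhod's theorem together with the convergence $k^{t_m,x_m}\to\hat k$ of the total-variation processes. This is precisely the step where the generalization to a jointly varying initial datum $(t_m,x_m)$, rather than only the spatial variable as in \cite{BahMatZal}, requires additional care, since the starting time $t_m$ now also moves the domain of integration.
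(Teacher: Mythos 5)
Your overall scheme --- tightness, Skorokhod representation, identification of the limit as a weak solution started from $(t,x)$, then weak uniqueness via $(A.3)$ --- is exactly the paper's. But there is a genuine gap at the identification step. You claim that passing to the limit in the integral equation is justified ``using the boundedness of the coefficients and the moment bounds.'' That is not enough here: $b$ and $\sigma$ are only measurable, so the almost sure uniform convergence $\hat{X}^m\to\hat{X}$ does \emph{not} imply $b(\hat{X}^m_r)\to b(\hat{X}_r)$ or $\sigma(\hat{X}^m_r)\to\sigma(\hat{X}_r)$, and boundedness alone cannot rescue the convergence of $\int_t^s b(\hat{X}^m_r)\,dr$ and $\int_t^s\sigma(\hat{X}^m_r)\,d\hat{W}^m_r$. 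This is precisely where the standing nondegeneracy assumption $(A.2)$ enters: the paper invokes Krylov's estimate for (reflecting) diffusions, which bounds $\mathbb{E}\int_t^T|\zeta(r,\hat{X}^m_r)|\,dr$ by an $L^{d+1}$-norm of $\zeta$ uniformly in $m$, so that $b$ and $\sigma$ can be replaced by continuous mollifications, the limit taken, and the mollification removed. Omitting this step leaves the proof broken, and it is the whole point of the ``measurable coefficients'' setting of the paper.

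A secondary problem is your tightness argument. You propose to obtain a modulus of continuity for $k^{t_m,x_m}$ by applying It\^o's formula to $l(X^{t_m,x_m})$ and then to transfer it to $K^{t_m,x_m}$ and $X^{t_m,x_m}$. But that identity expresses $k_s-k_r$ in terms of $l(X_s)-l(X_r)$ plus drift and martingale terms, and $|l(X_s)-l(X_r)|\le C|X_s-X_r|$, whose increments you have not yet controlled: the argument is circular. The paper breaks the circle using convexity: applying It\^o's formula to $|X^m_s-X^m_r|^2$, the term $\langle X^m_u-X^m_r,dK^m_u\rangle=\langle X^m_u-X^m_r,\nabla l(X^m_u)\rangle\,dk^m_u$ is nonpositive because $\nabla l(X^m_u)$ is the inward normal at $X^m_u\in\partial D$ and $X^m_r\in\bar{D}$ with $D$ convex; this yields $\mathbb{E}|X^m_s-X^m_r|^8\le C\max\{|s-r|^4,|s-r|^2\}$ directly, and the increments of $K^m$ are then read off from the equation itself. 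On the other hand, your attention to recovering the boundary-support and normal-direction conditions for $\hat{K}$ is legitimate (the paper is terse on that point), and your observation that $(A.4)$ is not needed is correct; but neither of these is where the real difficulty of the proposition lies.
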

\begin{proof}
Let $(t,x)\in [0,T]\times \bar{D}$ be fixed and $(t_n,x_n)\to (t,x)$,  as $n\to +\infty$. We set
$$(X^{t_n,x_n}_s,K^{t_n,x_n}_s)=(X^n_s,K^n_s).$$
We will prove that the family $(X^n,K^n)$ is tight as family of $\mathcal{C}([0,T],\mathbb{R}^d\times\mathbb{R}^d)-$valued random variables. By It\^o's formula applied to $X^n_s-X^n_r$, where $r$ is fixed and $s\ge r$ we deduce:
\begin{eqnarray*}
\mathbb{E}|X^n_s-X^n_r|^8 &\le& C|s-r|^4+C\mathbb{E}\left(\sup_{v\in[r,s]}\left|\int_r^v<X^n_u-X^n_r,\sigma(X^n_u)dW^n_u> \right|  \right)^4 \\
&\le& C|s-r|^4 +C\mathbb{E}\left( \int_r^s|X^n_u-X^n_r|^2|\sigma(X^n_u)|^2du\right)^2\\
&\le& C|s-r|^4+ C|s-r|^2 \le C\max\{|s-r|^4,|s-r|^2\}.
\end{eqnarray*}
Concerning $K^n$, we have:
$$K^n_s-K^n_r=(X^n_s-X^n_r)-\int_r^sb(X^n_u)du-\int_r^s\sigma(X^n_u)dW^n_u$$
Hence,
\begin{eqnarray*}
\mathbb{E}|K^n_s-K^n_r|^8&\le & C\mathbb{E}|X^n_s-X^n_r|^8+C\mathbb{E}\left(\int_r^s|b(X^n_u)|du \right)^8+C\mathbb{E}\left( \sup_{v\in [r,s]}\left| \int_r^v\sigma(X^n_u)dW^n_u\right| \right)^8  \\
&\le & C\max\{|s-r|^8,|s-r|^2\}.
\end{eqnarray*}
Then $(X^{t,x},K^{t,x})$ is tight on $\mathcal{C}([0,T],\mathbb{R}^d\times \mathbb{R}^d)$ with respect to the initial data $(t,x)$. By Prokhorov's theorem, see Chap I in \cite{PardRas},  there exists a subsequence still denoted by $(X^{n},K^{n})$ such that
$$\left( X^{n},K^{n}\right) \xrightarrow[U\times U ]{\ *\ } \left( X,K\right).$$
We will proceed to the identification of the limits $X\stackrel{law}{=}X^{t,x}$ and $K\stackrel{law}{=}K^{t,x}$.
By the Skorohod's theorem, we can choose a probability space $(\hat{\Omega},\hat{\mathcal{F}},\hat{\mathbb{P}})$, $(\hat{X}^n,\hat{K}^n,\hat{W}^n)$ and $(\hat{X},\hat{K},\hat{W})$ defined on this probability space such that
$$(\hat{X}^n,\hat{K}^n,\hat{W}^n)\stackrel{law}{=}(X^n,K^n,W^n),\quad (\hat{X},\hat{K},\hat{W})\stackrel{law}{=}(X,K,W) $$
and $(\hat{X}^n,\hat{K}^n,\hat{W}^n)\to (\hat{X},\hat{K},\hat{W}) $ $\hat{\mathbb{P}}$-a.s, as $n\to \infty$, where $(\hat{W}^n,\mathcal{F}^{\hat{W}^n,\hat{X}^n})$ and $(\hat{W},\mathcal{F}^{\hat{W},\hat{X}})$ are Brownian motions. We now define
\begin{eqnarray}\label{hat V}
\hat{V}^n_s&:=& x+\int_t^sb(\hat{X}^n_r)dr+\int_t^s\sigma(\hat{X}^n_r)d\hat{W}^n_r,\nonumber \\
\hat{V}_s&:=& x+\int_t^sb(\hat{X}_r)dr+\int_t^s\sigma(\hat{X}_r)d\hat{W}_r.
\end{eqnarray}
Since the processes $X^n$ and $X$ have finite  moments (uniformly in $n$) of any order,  $\sigma$ is non degenerate and the coefficients $b$, $\sigma$ are bounded, then using Skorokhod's representation theorem (\cite{Skor} p. 32) and Krylov's estimate, one can show that: 
$$\int_t^sb(\hat{X}^n_r)dr\xrightarrow[ ]{\ Proba\ } \int_t^sb(\hat{X}_r)dr\quad as \quad n\, \to +\infty, $$
$$\int_t^s\sigma(\hat{X}^n_r)d\hat{W}^n_r\xrightarrow[ ]{\ Proba\ } \int_t^s\sigma(\hat{X}_r)d\hat{W}_r, \quad as \quad n\, \to +\infty.$$

Since $b$ and $\sigma$ are bounded we deduce by the Lebesgue dominated theorem that the following convergence holds in $L^q(\hat{\Omega})$ for each $q\ge 1$,
\begin{equation*}
\hat{\mathbb{E}}\sup_{s\in[t,T]}\left|\hat{V}^n_s-\hat{V}_s \right|^q\to 0,\quad as\quad n\to +\infty.
\end{equation*}
We consider
\begin{equation*}
V^n_s:=x+\int_t^sb(X^n_r)dr+\int_t^s\sigma(X^n_r)dW^n_r.
\end{equation*}
Then $X^n_s=V^n_s+K^n_s$, and we remark that
\begin{equation*}
\left( X^n,K^n,W^n,V^n\right) \stackrel{law}{=}\left( \hat{X}^n,\hat{K}^n,\hat{W}^n,\hat{V}^n\right)\quad on\quad \mathcal{C}\left([0,T],\mathbb{R}^d\times\mathbb{R}^d\times\mathbb{R}^{d'}\times\mathbb{R}^d \right)
\end{equation*}
and
\begin{equation*}
\hat{X}^n_s=\hat{V}^n_s+\hat{K}^n_s\quad \hat{\mathbb{P}}-a.s.
\end{equation*}
We pass to the limits we get
\begin{equation*}
\hat{X}_s=\hat{V}_s+\hat{K}_s \quad \hat{\mathbb{P}}-a.s
\end{equation*}
taking into account of $(\ref{hat V})$, it follows that $(\hat{X},\hat{K})$ is a solution of Equation $(\ref{rsde2})$ with initial data $(t,x)$. By the weak uniqueness we have $(\hat{X},\hat{K})=(X^{t,x},K^{t,x})$. Then $(X^n,K^n)$ converges to $(X^{t,x},K^{t,x})$ as $n\to +\infty$. This achieves the proof.
\end{proof}

The next technical lemma is a stochastic version of Helly-Bray theorem, see Proposition 3.4 in \cite{Zal}.
\begin{lemma}\label{Helly-Bray}
	Let $(M^n,\eta^n):(\Omega^n,\mathcal{F}^n,\mathbb{P}^n)\to \mathcal{C}([0,T],\mathbb{R}^d)$ be a sequence of random variables and $(M,\eta)$ such that
	$$(M^n,\eta^{n})\xrightarrow[U\times U ]{*}(M,\eta).$$
	If $(\eta^n)_n$ has bounded variation a.s. and
	$$\sup_{n\ge 1}\mathbb{P}\left(|\eta^n|_{[0,T]}>a\right)\to 0,\quad as \quad a\to \infty $$
	then $\eta$ has a.s bounded variation and
	$$\int_0^T<M^n_r,d\eta^n_r>\xrightarrow[U ]{\ *\    }\int_0^T<M_r,d\eta_r>, \quad as \quad n\to \infty.$$
	
\end{lemma}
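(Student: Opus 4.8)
The plan is to combine the Skorohod representation theorem with a pathwise version of the classical Helly--Bray argument, the only genuinely new difficulty being that the total variations $|\eta^n|_{[0,T]}$ are merely tight in probability and not almost surely uniformly bounded.

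First I would invoke the Skorohod representation theorem: since $(M^n,\eta^n)\xrightarrow[U\times U]{*}(M,\eta)$, there exist, on some probability space $(\hat\Omega,\hat{\mathcal F},\hat{\mathbb P})$, copies $(\hat M^n,\hat\eta^n)\stackrel{law}{=}(M^n,\eta^n)$ and $(\hat M,\hat\eta)\stackrel{law}{=}(M,\eta)$ with $(\hat M^n,\hat\eta^n)\to(\hat M,\hat\eta)$ uniformly on $[0,T]$, $\hat{\mathbb P}$-a.s. Because total variation is lower semicontinuous for the sup-norm, the pointwise convergence $\hat\eta^n\to\hat\eta$ gives $|\hat\eta|_{[0,T]}\le\liminf_n|\hat\eta^n|_{[0,T]}$ a.s. Using the inclusion $\{\liminf_n|\hat\eta^n|_{[0,T]}>a\}\subseteq\liminf_n\{|\hat\eta^n|_{[0,T]}>a\}$ together with Fatou's lemma for sets and the hypothesis $\sup_n\mathbb{P}(|\eta^n|_{[0,T]}>a)\to 0$, I would conclude that $\hat{\mathbb P}(|\hat\eta|_{[0,T]}>a)$ is arbitrarily small for $a$ large, hence $|\hat\eta|_{[0,T]}<\infty$ a.s.; transporting back by equality in law yields that $\eta$ has bounded variation a.s.

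The core of the proof is the convergence of the integral processes. I would write, for $t\in[0,T]$,
\[
\int_0^t\langle \hat M^n_r,d\hat\eta^n_r\rangle-\int_0^t\langle \hat M_r,d\hat\eta_r\rangle
=\int_0^t\langle \hat M^n_r-\hat M_r,d\hat\eta^n_r\rangle+\Big(\int_0^t\langle \hat M_r,d\hat\eta^n_r\rangle-\int_0^t\langle \hat M_r,d\hat\eta_r\rangle\Big).
\]
The first term is bounded in sup-norm over $t$ by $\|\hat M^n-\hat M\|_\infty\,|\hat\eta^n|_{[0,T]}$; since $\|\hat M^n-\hat M\|_\infty\to0$ a.s. and $(|\hat\eta^n|_{[0,T]})_n$ is bounded in probability, this product tends to $0$ in probability. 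For the second term, $\hat M$ is a fixed continuous path, so I would approximate it uniformly by a step function $\hat M^\varepsilon$ with $\|\hat M-\hat M^\varepsilon\|_\infty<\varepsilon$. For a step integrand the integral is a finite sum of increments of $\hat\eta^n$, so the uniform convergence $\hat\eta^n\to\hat\eta$ gives $\sup_t|\int_0^t\langle \hat M^\varepsilon,d\hat\eta^n\rangle-\int_0^t\langle \hat M^\varepsilon,d\hat\eta\rangle|\to0$ a.s., while the two approximation errors are controlled by $\varepsilon(|\hat\eta^n|_{[0,T]}+|\hat\eta|_{[0,T]})$. Letting $n\to\infty$ and then $\varepsilon\to0$, and again using the tightness of the total variations together with the finiteness of $|\hat\eta|_{[0,T]}$, I would obtain that $\sup_{t}|\int_0^t\langle \hat M^n,d\hat\eta^n\rangle-\int_0^t\langle \hat M,d\hat\eta\rangle|\to0$ in probability.

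Finally, convergence in probability on the Skorohod space implies convergence in law, so $\int_0^\cdot\langle \hat M^n,d\hat\eta^n\rangle\to\int_0^\cdot\langle \hat M,d\hat\eta\rangle$ in law for the uniform topology; since these processes have the same laws as $\int_0^\cdot\langle M^n,d\eta^n\rangle$ and $\int_0^\cdot\langle M,d\eta\rangle$, the claimed convergence follows. The main obstacle, and the precise point where the hypothesis $\sup_n\mathbb{P}(|\eta^n|_{[0,T]}>a)\to0$ is essential, is exactly the passage from a purely pathwise Helly--Bray estimate to a statement in probability: the total variations are not almost surely uniformly bounded, so the approximation errors $\varepsilon|\hat\eta^n|_{[0,T]}$ must be tamed through tightness rather than by a deterministic bound, and likewise the product $\|\hat M^n-\hat M\|_\infty\,|\hat\eta^n|_{[0,T]}$ must be treated as the product of an a.s. null sequence and a tight one.
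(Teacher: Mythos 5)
Your proof is correct. Note that the paper itself does not prove this lemma: it is quoted as Proposition 3.4 of the reference \cite{Zal}, and your argument (Skorohod representation, lower semicontinuity of total variation under uniform convergence, splitting the integral into $\int\langle \hat M^n-\hat M,d\hat\eta^n\rangle$ plus a fixed-integrand term handled by step-function approximation, with the tightness hypothesis on $|\eta^n|_{[0,T]}$ taming the error terms) is exactly the standard route one expects behind that citation. Two small points to tighten if you write this out in full: the step-function approximant of $\hat M$ cannot have a deterministic accuracy $\varepsilon$ for a deterministic mesh, since the modulus of continuity of $\hat M(\omega)$ is random --- use a deterministic partition of mesh $1/m$ and bound the error by $w_{\hat M}(1/m)\bigl(|\hat\eta^n|_{[0,T]}+|\hat\eta|_{[0,T]}\bigr)$, which still goes to $0$ in probability by first choosing $a$ from the tightness of the variations and then $m$ from $w_{\hat M}(1/m)\to 0$ a.s.; and the final transfer back from the hatted copies requires observing that $(M,\eta)\mapsto\int_0^{\cdot}\langle M_r,d\eta_r\rangle$ is a measurable functional of the pair of paths on the (full-measure) set where $\eta$ has finite variation, so equality in law of the pairs gives equality in law of the integral processes.
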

We can immediately deduce from the previous lemma the following convergences.
\begin{lemma}
Assume $(A.1)-(A.4)$. Then we have
		$$k^{t,x,n}\xrightarrow[ U]{\ *\ } k^{t,x}\quad \mbox{and}\quad
		 k^{t_n,x_n}\xrightarrow[ U]{\ *\ }  k^{t,x}.$$
\end{lemma}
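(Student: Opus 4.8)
The plan is to deduce both statements directly from the stochastic Helly--Bray lemma (Lemma~\ref{Helly-Bray}), choosing in each case $\eta^n$ to be the reflection (resp. penalization) term $K$ and $M^n$ to be $\nabla l$ evaluated along the associated state process, so that the integral $\int\langle M^n,d\eta^n\rangle$ is precisely the process $k$.

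First I would establish $k^{t,x,n}\xrightarrow[U]{*}k^{t,x}$. Set $M^n:=\nabla l(X^{t,x,n})$, $\eta^n:=K^{t,x,n}$, $M:=\nabla l(X^{t,x})$ and $\eta:=K^{t,x}$. The first Proposition of this section gives the joint convergence $(X^{t,x,n},K^{t,x,n})\xrightarrow[U\times U]{*}(X^{t,x},K^{t,x})$. Since $l\in\mathcal{C}^2_b(\mathbb{R}^d)$, the map $\varphi\mapsto\nabla l\circ\varphi$ is continuous from $\mathcal{C}([0,T],\mathbb{R}^d)$ into itself for the uniform topology; hence by the continuous mapping theorem $(M^n,\eta^n)\xrightarrow[U\times U]{*}(M,\eta)$, which is the joint convergence required by Lemma~\ref{Helly-Bray}. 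Moreover $K^{t,x,n}_s=\int_t^s-n\delta(X^{t,x,n}_r)\,dr$ is absolutely continuous, hence of bounded variation, and its total variation is uniformly tight: by Markov's inequality and the moment bound~\eqref{boundXn}, $\sup_n\mathbb{P}\big(|K^{t,x,n}|_{[0,T]}>a\big)\le a^{-1}\sup_n\mathbb{E}|K^{t,x,n}|_{[t,T]}\le C/a\to 0$ as $a\to\infty$. Lemma~\ref{Helly-Bray} then yields the convergence of the running integrals $\int_0^{\cdot}\langle\nabla l(X^{t,x,n}_r),dK^{t,x,n}_r\rangle\xrightarrow[U]{*}\int_0^{\cdot}\langle\nabla l(X^{t,x}_r),dK^{t,x}_r\rangle$, that is, $k^{t,x,n}\xrightarrow[U]{*}k^{t,x}$.

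The second convergence $k^{t_n,x_n}\xrightarrow[U]{*}k^{t,x}$ follows along identical lines, with the penalization index replaced by the perturbation of the initial data. I would take $M^n:=\nabla l(X^{t_n,x_n})$ and $\eta^n:=K^{t_n,x_n}$; the starting joint convergence $(X^{t_n,x_n},K^{t_n,x_n})\xrightarrow[U\times U]{*}(X^{t,x},K^{t,x})$ is provided by the Proposition on continuity in law with respect to the initial data for equation~(\ref{rsde2}), and the continuous mapping theorem again gives $(M^n,\eta^n)\xrightarrow[U\times U]{*}(M,\eta)$. The uniform tightness of the total variations is obtained from the a priori estimate $\sup_n\mathbb{E}|K^{t_n,x_n}|_T^q<\infty$ of the earlier Proposition via Markov's inequality, and a second application of Lemma~\ref{Helly-Bray} concludes.

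I expect the only delicate point to be the verification of the hypotheses of Lemma~\ref{Helly-Bray}, and in particular the uniform tightness of the total-variation processes $(|K^n|_{[0,T]})_n$; this, however, is immediate from the uniform moment bounds already at our disposal. The passage from $(X^n,K^n)$ to $(\nabla l(X^n),K^n)$ is harmless, since $\nabla l$ is continuous and bounded, so that composition is continuous for the uniform topology and the continuous mapping theorem applies verbatim.
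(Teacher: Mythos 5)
Your proof is correct and follows essentially the same route as the paper: both convergences are obtained by applying the stochastic Helly--Bray lemma with $(M^n,\eta^n)=(\nabla l(X^{t,x,n}),K^{t,x,n})$ and $(M^n,\eta^n)=(\nabla l(X^{t_n,x_n}),K^{t_n,x_n})$ respectively, starting from the joint convergences established in the preceding propositions. Your explicit verification of the hypotheses (continuity of composition with $\nabla l$, and uniform tightness of the total variations via Markov's inequality and the moment bounds) is a welcome level of detail that the paper leaves implicit.
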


\begin{proof}
In view of the convergence $(X^{t,x,n},K^{t,x,n})\xrightarrow[U\times U ]{\ *\ }(X^{t,x},K^{t,x})$ and Lemma \ref{Helly-Bray} applied with $(M^n,\eta^n)=(\nabla l(X^{t,x,n}),K^{t,x,n})$, we get $k^{t,x,n}\xrightarrow[U ]{\ *\ } k^{t,x}$.	
 For the second point, by the continuity in law with respect to  the initial data,  $(X^{t_n,x_n},K^{t_n,x_n})\xrightarrow[U\times U ]{\ *\ }(X^{t,x},K^{t,x})$, again by  Lemma 8 applied this time with $(M^n,\eta^n)=(\nabla l(X^{t_n,x_n}),K^{t_n,x_n})$, we obtain  $k^{t_n,x_n}\xrightarrow[U ]{\ *\ } k^{t,x}$.
\end{proof}
\section{Backward stochastic differential equations}
 Consider the functions $f$, $h:\, [0,T]\times\R^{d}\times\R^{k}
\rightarrow\R^{k} $ and  $ g\,:\,\R^{d}\rightarrow\R^{k} $, satisfying the
following assumptions:
\begin{itemize}
	\item[$(A.5)$] There exist positive constants $C_1$, $C_2$, $l_h$ and $\mu_f \in \mathbb{R}$, $\beta<0$ and $ q\geq 1 $ such that $\forall t, s\in[0,T]$,
	$ \forall\, (x, x^{\prime}, y, y^{\prime})
	\in\left(\mathbb{R}^{d}\right)^{2}\times\left(\mathbb{R}^{k}\right)^{2} $ we have
	\begin{itemize}
		\item[(i)]
		$<y'-y,f(t, x, y')-f(t, x, y)>\,\leq \mu_f\, |y'-y|^2$,
		\item[(ii)]$\mid h(t, x', y')-h(s, x, y)|\leq l_h\,
		(|t-s|+|x'- x|+|y'- y|) $,
		\item[(iii)] $<y'-y,h(t, x, y')-h(t, x, y)>\,\leq \beta\, |y'-y|^2$,
		\item[{\rm (iv)}]\
		$ |f(t,x,y)|+|h(t,x,y)|\leq C_{1}\, (1+|y|) $,
		\item[(v)]$ |g(x)|\leq C_{2} (1+|x|^{q}) $.		
	\end{itemize}
 $g$ is continuous and $f$ is measurable with respect to $x$ and  continuous in $(t,y)$.	
\end{itemize}

We assume without loss of generality that the processes $(X_s^{t,x,n},K^{t,x,n}_s)_{s\in [t,T]}$ and $(X_s^{t,x},K^{t,x}_s)_{s\in [t,T]}$ are considered on the canonical space. Consider the following  generalized BSDEs on $[t,T]$
\begin{eqnarray}
Y^{t,x,n}_{s}&=& g(X^{t,x,n}_{T})+
\int_{s}^{T} f(r, X^{t,x,n}_{r}, Y^{t,x,n}_{r})\, dr
-\int_{s}^{T}U^{t,x,n}_{r}\, dM^{X^{t,x,n}}_{r}\nonumber\\
&&+\int_s^Th(r,X^{t,x,n}_r,Y^{t,x,n}_r)dk^{t,x,n}_r \label{bakseq}
\end{eqnarray}
and
 \begin{equation}
 Y^{t,x}_{s}=g(X^{t,x}_{T})+
 \int_{s}^{T} f(r, X^{t,x}_{r}, Y^{t,x}_{r})\, dr
 -\int_{s}^{T}U^{t,x}_{r}\, dM^{X^{t,x}}_{r}+\int_s^Th(r,X^{t,x}_r,Y^{t,x}_r)dk^{t,x}_r,
 \label{baklimit}
 \end{equation}
 where
 \begin{equation}
M_s^{X^{t,x,n}}:=\int_t^s\sigma(X^{t,x,n}_r)dW_r\quad \text{and}\quad M_s^{X^{t,x}}:=\int_t^s\sigma(X^{t,x}_r)dW_r.
 \end{equation}
Under assumption $(A.5)$, there exist $(Y_s^{t,x,n},U^{t,x,n}_s)_{s\in [t,T]}$ and $(Y_s^{t,x},U^{t,x}_s)_{s\in [t,T]}$ unique solutions of equations $(\ref{bakseq})$ and  $(\ref{baklimit})$ respectively (see \cite{PardZhang}).
\begin{remark}
We note that one or other assumption (A.5)(ii) or (A.5)(iii) is sufficient to ensure the existence and uniqueness of the solutions to both BSDEs \eqref{bakseq} and \eqref{baklimit}. Condition (A.5)(iii) will be used to establish some estimates in goal to prove the tightness proprieties and  (A.5)(ii) is necessary for the identification of the limit.
\end{remark}
The next proposition will be used in order  to get  the convergence of the solutions of the sequence of  penalized PDEs.
\begin{proposition}\label{cv Y^n}
	Assume $(A.1)$-$(A.5)$. The following convergence holds
	$$(Y^{t,x,n},M^{t,x,n},H^{t,x,n})\xrightarrow[ S\times S\times S]{\ *\ } (Y^{t,x},M^{t,x},H^{t,x})$$
	where
	\begin{equation*}
		M^{t,x,n}_s:=\int_t^sU^{t,x,n}_rdM^{X^{t,x,n}}_r,\quad H^{t,x,n}_s:=\int_t^sh(r,X^{t,x,n}_r,Y^{t,x,n}_r)dk^{t,x,n}_r,
	\end{equation*}
	\begin{equation}
		M^{t,x}_s:=\int_t^sU^{t,x}_rdM^{X^{t,x}}_r\quad\mbox{and}\quad H^{t,x}_s:=\int_t^sh(r,X^{t,x}_r,Y^{t,x}_r)dk^{t,x}_r.
	\end{equation}
	Moreover, $\displaystyle\lim_{n\to \infty}Y^{t,x,n}_t=Y^{t,x}_t$.
\end{proposition}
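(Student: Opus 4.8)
The plan is to follow the $S$-tightness strategy for the backward equations: establish $n$-uniform moment estimates, deduce $S$-tightness of the triple $(Y^{t,x,n},M^{t,x,n},H^{t,x,n})$, extract a convergent subsequence, identify its limit as a solution of \eqref{baklimit}, and conclude by the uniqueness of that solution. First I would derive the a priori estimates. Applying It\^o's formula to $|Y^{t,x,n}_s|^2$ and using the monotonicity of $f$ in (A.5)(i), the monotonicity of $h$ with $\beta<0$ in (A.5)(iii) to absorb the boundary contribution, the linear growth (A.5)(iv), the growth of $g$ in (A.5)(v), together with the forward bound \eqref{boundXn} on $\sup_n\mathbb{E}(k^{t,x,n}_T)^q$, I expect to obtain
$$\sup_n\Big[\mathbb{E}\sup_{s\in[t,T]}|Y^{t,x,n}_s|^2 + \mathbb{E}\int_t^T|U^{t,x,n}_r|^2\,d\langle M^{X^{t,x,n}}\rangle_r + \mathbb{E}|H^{t,x,n}_T|^2\Big]<\infty.$$

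Then I would prove $S$-tightness. Rewriting \eqref{bakseq} as $Y^{t,x,n}_s=Y^{t,x,n}_t-\int_t^s f(r,X^{t,x,n}_r,Y^{t,x,n}_r)\,dr-\int_t^s h(r,X^{t,x,n}_r,Y^{t,x,n}_r)\,dk^{t,x,n}_r+M^{t,x,n}_s$ exhibits $Y^{t,x,n}$ as a semimartingale whose finite-variation part has total variation bounded by $C(1+\sup_s|Y^{t,x,n}_s|)(T+k^{t,x,n}_T)$ and whose martingale part $M^{t,x,n}$ has bracket controlled by the estimate above. By Jakubowski's criterion (a uniform bound on $\mathbb{E}\sup_s|\cdot|$ together with a uniform bound on the conditional variation), the processes $Y^{t,x,n}$, $M^{t,x,n}$ and $H^{t,x,n}$ are each $S$-tight; joint tightness with the already-established uniform convergences of $(X^{t,x,n},K^{t,x,n})$ and of $k^{t,x,n}$ then yields $S$-tightness of the whole family.

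Next, I would extract a subsequence converging in the product of the uniform topology (for the forward data) and the $S$-topology (for the backward data) and pass to an almost sure representation in the spirit of Jakubowski on an auxiliary probability space. Identification of the limit is the heart of the argument. The limit of the forward part is the reflected diffusion with the correct law; for the driver term $\int_t^s f(r,X^{t,x,n}_r,Y^{t,x,n}_r)\,dr$ the difficulty is that $f$ is only measurable in $x$, so one cannot pass to the limit by continuity alone. I would instead approximate $f$ in the $x$-variable by continuous functions and control the resulting error through Krylov's estimate together with the nondegeneracy (A.2), exactly as in the identification of the forward limit, while the continuous-in-$(t,y)$ dependence passes to the limit directly. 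The boundary term $\int_t^s h(r,X^{t,x,n}_r,Y^{t,x,n}_r)\,dk^{t,x,n}_r$ is treated via the Lipschitz property (A.5)(ii) and Lemma \ref{Helly-Bray} applied to $(h(\cdot,X^{t,x,n},Y^{t,x,n}),K^{t,x,n})$, using $k^{t,x,n}\xrightarrow[U]{*}k^{t,x}$. The martingale limit $M^{t,x}$ is identified as a martingale in the limit filtration, which is Brownian, hence represents as $\int U^{t,x}\,dM^{X^{t,x}}$.

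Finally, the limit $(Y,M,H)$ solves \eqref{baklimit}, so by uniqueness of its solution the limit is independent of the chosen subsequence and the whole sequence converges. Since $Y^{t,x,n}_t=u^n(t,x)$ is deterministic, the $S$-convergence (which forces convergence at the initial time) together with the uniform integrability furnished by the estimates gives $\lim_n Y^{t,x,n}_t=Y^{t,x}_t$. The main obstacle is the identification of $\int_t^s f\,dr$ under the mere measurability of $f$ in $x$, which is precisely the point where Krylov's estimates and the uniform nondegeneracy of $\sigma$ are indispensable.
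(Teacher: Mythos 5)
Your overall strategy coincides with the paper's: uniform estimates, $S$-tightness via the conditional variation criterion, extraction of a jointly convergent subsequence, mollification of $f$ in $x$ combined with Krylov's estimate and the nondegeneracy $(A.2)$ to pass to the limit in the driver, the Helly--Bray-type lemma for the boundary term, and identification by uniqueness. However, two steps in your identification are not adequately justified and hide the genuinely delicate points.

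First, you dispose of the martingale term by asserting that ``the limit filtration is Brownian, hence $\bar M$ represents as $\int U^{t,x}\,dM^{X^{t,x}}$'' and then invoke ``uniqueness of the solution of \eqref{baklimit}.'' This skips the real difficulty: the limit $(\bar Y,\bar M)$ is a solution of the limiting equation only with respect to the \emph{enlarged} filtration $\mathcal{F}^{X^{t,x},\bar Y,\bar M}$, while the uniqueness statement for \eqref{baklimit} concerns $\mathcal{F}^{X^{t,x}}$-adapted solutions. To compare $\bar Y$ with $Y^{t,x}$ one must first prove that $M^{X^{t,x}}$ (and hence $M^{t,x}=\int U^{t,x}\,dM^{X^{t,x}}$) remains a martingale under the enlarged filtration. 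The paper does this through a martingale-problem argument: it passes to the limit, for test functions $\varphi\in\mathcal{C}^\infty_b$ and bounded continuous functionals $\psi_{s_1}$ of the past, in the martingale property of $\varphi(X^{t,x,n}_s)-\int\mathcal{L}\varphi\,dr-\int\nabla\varphi\,dK^{t,x,n}$, which requires yet another application of Krylov's estimate (because $\mathcal{L}\varphi$ contains the merely measurable $b,\sigma\sigma^*$) and Lemma \ref{Helly-Bray} for the $dK^{t,x,n}$ integral; only then does an It\^o--Gronwall comparison yield $\bar Y=Y^{t,x}$ and $\bar M=M^{t,x}$. Without this step your uniqueness argument does not close.

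Second, your justification of $\lim_n Y^{t,x,n}_t=Y^{t,x}_t$ --- that ``$S$-convergence forces convergence at the initial time'' --- is false as stated: $S$-convergence only gives pointwise convergence outside a countable exceptional set $Q$, and nothing prevents $t\in Q$. The correct route (the one the paper takes) is to evaluate the BSDE at $s=t$, so that $Y^{t,x,n}_t$ is expressed through quantities at the terminal time $T$, and to use that the projection $\pi_T$ \emph{is} $S$-continuous (Remark \ref{rem21}), giving $M^{t,x,n}_T\to M^{t,x}_T$ in law, together with the convergence of $g(X^{t,x,n}_T)$ and of the two integral terms over $[t,T]$.
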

\begin{proof}
The solutions satisfy the following estimate
	\begin{eqnarray*}
		\sup_{n\ge 0}\mathbb{E}\sup_{t\le s\le T}|Y^{t,x,n}_s|^2+\sup_{n\ge 0}\mathbb{E}\int_t^T\|U^{t,x,n}_s\sigma(X^{t,x,n}_s)\|^2ds<+\infty
	\end{eqnarray*}	
for the proof see \cite{BouCas}. To show the tightness property with respect to the $S$-topology we compute the conditional variation $CV_T$ defined in (\ref{Conditional var}) in Appendix.  Arguing as in \cite{BouCas}, we can prove that $(Y^{t,x,n},M^{t,x,n},H^{t,x,n})$ is tight with respect to the $S-$topology, so there exists a subsequence still denoted $(Y^{t,x,n},M^{t,x,n},H^{t,x,n})$ and  $(\bar{Y},\bar{M},\bar{H})$ in $(\mathcal{D}([0,T],\mathbb{R}^k))^3$, such that
\begin{equation}\label{cv: Xn,Kn,Yn,Mn,Hn}
(X^{t,x,n},K^{t,x,n},Y^{t,x,n},M^{t,x,n},H^{t,x,n})\xrightarrow[U\times U \times S\times S\times S]{\ *\ }  (X^{t,x},K^{t,x},\bar{Y},\bar{M},\bar{H}).
\end{equation}
Next, we will pass to the limit and show the convergence of each term in BSDE (\ref{bakseq}). Let's start with  $\int_{s}^{T}f(r,X^{t,x,n}_r,Y^{t,x,n}_r)dr$. It should be noted that the function $f$ may be discontinuous in $x$, then the mapping $(x,y)\to \int_{0}^{T}f(r,x(r),y(r))dr$ from $\mathcal{C}([0,T],\mathbb{R}^d)\times \mathcal{D}([0,T],\mathbb{R}^k)$ to $\mathbb{R}^k$ is not necessary continuous. So, to prove the convergence of this term  we proceed as follows: for $R>0$ let $D_R:=\{x\in \mathbb{R}^d, |x|\le R\}$ and $\tau_R^n:=\inf\{r>t, |X_r^{t,x,n}|>R\, or\, |X_r^{t,x}|>R \}\wedge T$, with convention $\inf\{\emptyset\}=\infty$ and let $f_\eta(t,x,y)=\eta^{-d}\varphi(x/\eta)*f(t,x,y)$, where $\varphi$ is an infinitely differentiable function such that $\int \varphi(x)dx=1$.
\begin{eqnarray}\label{CV: f(Xn,Yn)}
\mathbb{E}\left|\int_{s}^{T\wedge \tau_R^n}f(r,X^{t,x,n}_r, Y^{t,x,n}_r)-f(r,X^{t,x,}_r,\bar{Y}_r)dr\right|\le J_1(n,R,\eta)+ J_2(n,R,\eta)+J_3(R,\eta)
\end{eqnarray}
where
\begin{eqnarray*}
&& J_1(n,R,\eta)=\mathbb{E}\left|\int_{s}^{T\wedge \tau_R^n}f(r,X^{t,x,n}_r, Y^{t,x,n}_r)-f_\eta(r,X^{t,x,n}_r,Y^{t,x,n}_r)dr\right|
\\
&&	J_2(n,R,\eta)=\mathbb{E}\left|\int_{s}^{T\wedge \tau_R^n}f_\eta(r,X^{t,x,n}_r,Y^{t,x,n}_r)-f_\eta(r,X^{t,x}_r,\bar{Y}_r)dr\right|\\
&& J_3(R,\eta)=\mathbb{E}\left|\int_{s}^{T\wedge \tau_R^n}f_\eta(r,X^{t,x}_r, \bar{Y}_r)-f(r,X^{t,x}_r,\bar{Y}_r)dr\right|.
\end{eqnarray*}
Since the function $f_\eta$ is continuous with respect to its three arguments, it follows that the maps  $(x,y)\to \int_{0}^{T}f_\eta(r,x(r),y(r))dr$ is continuous, we pass to the limit in $J_2(n,R,\eta)$  as $n\to +\infty$ we deduce that $J_2(n,R,\eta)$  goes to $0$. Now, consider   $J_1(n,R,\eta)$ and let $M>0$
\begin{eqnarray*}
J_1(n,R,\eta)&\le& \mathbb{E}\int_{s}^{T\wedge \tau_R^n}\left|f(r,X^{t,x,n}_r, Y^{t,x,n}_r)-f_\eta(r,X^{t,x,n}_r,Y^{t,x,n}_r)\right|1_{\{|Y^{t,x,n}_r|>M\}}dr\\
&& + \mathbb{E}\int_{s}^{T\wedge \tau_R^n}\left|f(r,X^{t,x,n}_r, Y^{t,x,n}_r)-f_\eta(r,X^{r,x,n}_r,Y^{t,x,n}_r)\right|1_{\{|Y^{t,x,n}_r|\le M\}}dr\\
&\le & C\, \mathbb{E}\int_{s}^{T\wedge \tau_R^n}(1+\left|Y^{t,x,n}_r\right|)1_{\{|Y^{t,x,n}_r|>M\}}dr\\
&& +\mathbb{E}\int_{s}^{T\wedge \tau_R^n}\sup_{\{|y|\le M\}}\left|f(r,X^{t,x,n}_r,y)-f_\eta(r,X^{t,x,n}_r,y)\right|dr\\
&\le & \frac{C}{M^{1/2}} \left( \mathbb{E}\int_{s}^{T}(1+\left|Y^{t,x,n}_r\right|)^2 dr\right)^{\frac{1}{2}} \left( \mathbb{E}\int_{s}^{T}|Y^{t,x,n}_r|dr\right)^{\frac{1}{2}} \\
&& +\mathbb{E}\int_{s}^{T\wedge \tau_R^n}|\zeta_\eta(t,X^{t,x,n}_r)|dr
\end{eqnarray*}
where $\displaystyle\zeta_\eta(r,x)=\sup_{\{|y|\le M\}}\left|f(r,x,y)-f_\eta(r,x,y)\right| $. Thanks to Krylov's inequality, there exists a positive constant $N(T,R,d)$ such that
\begin{eqnarray*}
	J_1(n,R,\eta)&\le& \frac{C}{M^{1/2}} \left( \mathbb{E}\int_{s}^{T\wedge \tau_R^n}(1+\left|Y^{t,x,n}_r\right|)^2 dr\right)^{\frac{1}{2}} \left( \mathbb{E}\int_{s}^{T}|Y^{t,x,n}_r|dr\right)^{\frac{1}{2}}\\
	&&   +N(T,R,d)\|\zeta_\eta\|_{L^{d+2}([0,T]\times D_R)}
\end{eqnarray*}
passing successively to the limit in $\eta\to 0$ and $M\to +\infty$, it follows that $J_1(n,R,\eta)$ tends to zero for all $n\in \mathbb{N}$. Concerning $J_3(R,\eta)$ similar arguments as above prove the convergence of this term to zero as $\eta$ goes to zero, we note that in the prove of the convergence of this term, we will need some integrability on the process $\bar{Y}$ and this is ensured by Lemma A.2 in \cite{LeJay}.

 Since $\tau_R^n$ is increasing to infinity as $R$ tends to infinity, then for $R$ large enough $T\wedge\tau_R^n=T$.
Finally,
$$\lim_{n\to +\infty}\mathbb{E}\left|\int_{s}^{T}f(r,X^{t,x,n}_r, Y^{t,x,n}_r)-f(t,X^{t,x}_r,\bar{Y}_r)dr\right| =0.$$
Concerning the term $\int_{s}^{T}h(r,X^{t,x,n}_r,Y^{t,x,n}_r)dk^{t,x,n}_r$, we use the Lipschitz continuity of $h$, the week convergence of $k^{t,x,n}$ to $k^{t,x}$ with respect to the uniform topology, together with Lemma 3.3 in \cite{BouCas}, we get that there exists a countable set $Q\subset [0,T)$ such that, for any $s\in [0,T]\backslash Q$,
\begin{eqnarray*}
	\bar{Y}_s&=& g(X^{t,x}_T)+\int_s^Tf(r,X^{t,x}_r,\bar{Y}_r)dr-(\bar{M}_T-\bar{M}_s)+\int_s^Th(r,X^{t,x}_r,\bar{Y}_r)dk^{t,x}_r.
\end{eqnarray*}
Since the processes $\bar{Y}$, $\bar{M}$ and $\bar{H}$ are càdlàg, the previous equality holds true for all $s\in[t,T]$.
Moreover, Lemma A.1 in \cite{LeJay}, ensures that the process $\bar{M}$ is a $\mathcal{F}^{X^{t,x},\bar{Y},\bar{M}}-$martingale. We shall now show that $M^{X^{t,x}}$ is a $\mathcal{F}^{X^{t,x},\bar{Y},\bar{M}}-$martingale. Let $\psi_s$ be a bounded continuous mapping form $\mathcal{C}\left([t,s],\mathbb{R}^d \right)\times\mathcal{D}\left([t,s],\mathbb{R}^k \right)^2$, $\varphi\in \mathcal{C}^\infty_b(\mathbb{R}^d)$ and
$$\mathcal{L}=\frac{1}{2}\sum_{i,j}\left(\sigma\sigma^*(.) \right)_{ij}\frac{\partial^2}{\partial x_i \partial x_j}+\sum_ib_i(.)\frac{\partial}{\partial x_i} $$
be the infinitesimal generator of the diffusion  part of the process $X^{t,x}$.
By It\^o's formula we obtain that
$$\varphi(X^{t,x,n}_s)-\varphi(x)-\int_{t}^{s}\mathcal{L}\varphi(X^{t,x,n}_r)dr-\int_{t}^{s}\nabla\varphi(X^{t,x,n}_r)dK^{t,x,n}_r$$
is a $\mathcal{F}^{X^{t,x,n}}$-martingale.
For any $t\le s_1<s_2\le T$ and for each $n\in \mathbb{N}$, we have
\begin{equation*}
	\begin{split}
		\mathbb{E}&\left[\psi_{s_1}\left(X^{t,x,n},Y^{t,x,n},M^{t,x,n} \right)\left(\varphi(X^{t,x,n}_{s_2})-\varphi(X^{x,t,n}_{s_1})-\int_{s_1}^{s_2}\mathcal{L}\varphi(X^{t,x,n}_r)dr\right. \right. \\
		&\left.\left. -\int_{s_1}^{s_2}\nabla\varphi(X^{t,x,n}_r)dK^{t,x,n}_r \right)   \right]=0.
	\end{split}
\end{equation*}
Moreover,
\begin{eqnarray*}
	&&\lim_{n\to \infty}\mathbb{E}\left[\psi_{s_1}\left(X^{t,x,n},Y^{t,x,n},M^{t,x,n} \right)\left(\varphi(X^{t,x,n}_{s_2})-\varphi(X^{t,x,n}_{s_1})-\int_{s_1}^{s_2}\mathcal{L}\varphi(X^{t,x,n}_r)dr \right)   \right]\\
	&&=\mathbb{E}\left[\psi_{s_1}\left(X^{t,x},\bar{Y},\bar{M} \right)\left(\varphi(X^{t,x}_{s_2})-\varphi(X^{t,x}_{s_1})-\int_{s_1}^{s_2}\mathcal{L}\varphi(X^{t,x}_r)dr\right)   \right].
\end{eqnarray*}
In fact, we will only show the convergence of the term $\psi_{s_1}\left(X^{t,x,n},Y^{t,x,n},M^{t,x,n} \right)\int_{s_1}^{s_2}\mathcal{L}\varphi(X^{t,x,n}_r)dr$
\begin{eqnarray*}
&&\mathbb{E}\left|\psi_{s_1}\left(X^{t,x,n},Y^{t,x,n},M^{t,x,n} \right)\int_{s_1}^{s_2}\mathcal{L}\varphi(X^{t,x,n}_r)dr -\psi_{s_1}\left(X^{t,x},\bar{Y},\bar{M} \right)\int_{s_1}^{s_2}\mathcal{L}\varphi(X^{t,x}_r)dr\right|\\
&&\le \mathbb{E}\left| \left[ \psi_{s_1}\left(X^{t,x,n},Y^{t,x,n},M^{t,x,n} \right) -\psi_{s_1}\left(X^{t,x},\bar{Y},\bar{M} \right)\right] \int_{s_1}^{s_2}\mathcal{L}\varphi(X^{t,x,n}_r)dr\right|\\
&& + \mathbb{E}\left|\psi_{s_1}\left(X^{t,x},\bar{Y},\bar{M} \right)\left[ \int_{s_1}^{s_2}\mathcal{L}\varphi(X^{t,x,n}_r)dr -\int_{s_1}^{s_2}\mathcal{L}\varphi(X^{t,x}_r)dr\right] \right|\\
&& = B_1(n)+B_2(n)
\end{eqnarray*}
in view of (\ref{cv: Xn,Kn,Yn,Mn,Hn}), the continuity of $\psi$ and the boundedness of $b$, $\sigma$, $\varphi$, $\frac{\partial \varphi}{\partial x_i}$ and $\frac{\partial^2 \varphi}{\partial x_i \partial x_j}$ we obtain $\displaystyle\lim_{n\to +\infty} B_1(n)=0$. Concerning $B_2(n)$
\begin{eqnarray*}
B_2(n)&=& \mathbb{E}\left|\psi_{s_1}\left(X^{t,x},\bar{Y},\bar{M} \right)\left[ \int_{s_1}^{s_2}\mathcal{L}\varphi(X^{t,x,n}_r)dr -\int_{s_1}^{s_2}\mathcal{L}\varphi(X^{t,x}_r)dr\right] \right|\\
&\le& C \,\mathbb{E} \int_{s_1}^{s_2}\left|\mathcal{L}\varphi(X^{t,x,n}_r) -\mathcal{L}\varphi(X^{t,x}_r)\right|dr\\
&\le & C \sum_i^d \,\mathbb{E} \int_{s_1}^{s_2}\left|b_i(X^{t,x,n}_r)\frac{\partial \varphi }{\partial x_i}(X^{t,x,n}_r)-b_i(X^{t,x}_r)\frac{\partial \varphi }{\partial x_i}(X^{t,x}_r)\right|dr\\
&+& C \sum_{i,j}^d\,\mathbb{E} \int_{s_1}^{s_2}\left|\left(\sigma\sigma^*(X^{t,x,n}_r) \right)_{ij}\frac{\partial^2 \varphi }{\partial x_i x_j}(X^{t,x,n}_r)-\left(\sigma\sigma^*(X^{t,x}_r) \right)_{ij}\frac{\partial^2 \varphi }{\partial x_i x_j}(X^{t,x}_r)\right|dr
\end{eqnarray*}
using the boundedness of $b$, $\sigma$, $\frac{\partial \varphi}{\partial x_i}$ and $\frac{\partial^2 \varphi}{\partial x_i \partial x_j}$, combined with Krylov's estimate, we proceed as in (\ref{CV: f(Xn,Yn)}) to conclude that $\displaystyle\lim_{n\to +\infty}B_2(n)=0$.
On the other side, using (\ref{cv: Xn,Kn,Yn,Mn,Hn}), the boundedness of $\psi$, $\nabla\varphi$ and estimation in (\ref{boundXn}) together with Lemma \ref{Helly-Bray} we obtain
\begin{eqnarray*}
	&& \lim_{n\to \infty}\mathbb{E}\left[\psi_{s_1}\left(X^{t,x,n},Y^{t,x,n},M^{t,x,n} \right)\int_{s_1}^{s_2}\nabla\varphi(X^{t,x,n}_r)dK^{t,x,n}_r \right]\\
	&&=\mathbb{E}\left[\psi_{s_1}\left(X^{t,x},\bar{Y},\bar{M} \right)\int_{s_1}^{s_2}\nabla\varphi(X^{t,x}_r)dK^{t,x}_r \right].
\end{eqnarray*}
Hence,
\begin{eqnarray*}
	\mathbb{E}\left[\psi_{s_1}\left(X^{t,x},\bar{Y},\bar{M} \right)\left(\varphi(X^{t,x}_{s_2})-\varphi(X^{x,t}_{s_1})-\int_{s_1}^{s_2}\mathcal{L}\varphi(X^{t,x}_r)dr
	-\int_{s_1}^{s_2}\nabla\varphi(X^{t,x}_r)dK^{t,x}_r \right)   \right]=0
\end{eqnarray*}
It\^o's formula gives rise
\begin{equation*}
	\mathbb{E}\left[\psi_{s_1}\left(X^{t,x},\bar{Y},\bar{M} \right)\int_{s_1}^{s_2}\nabla\varphi(X^{t,x}_r)dM^{X^{t,x}}_r \right]=0
\end{equation*}
then, $M^{X^{t,x}}$ is a $\mathcal{F}^{X^{t,x},\bar{Y},\bar{M}}-$ martingale. Since $Y^{t,x}$ and $U^{t,x}$ are $\mathcal{F}^{X^{t,x}}-$adapted, $M^{t,x}=\int_{t}^{.}U^{t,x}_rdM^{X^{t,x}}_r$ is also $\mathcal{F}^{X^{t,x},\bar{Y},\bar{M}}-$ martingale. Therefore, using It\^o's formula, assumptions on $f$ and $h$, and a generalized Gronwall lemma (see Lemma 12 in \cite{MatRas}), we obtain
$$Y^{t,x}=\bar{Y}\quad \mbox{and}\quad M^{t,x}=\bar{M}.$$
For the second claim, By Lemma 3.3 in \cite{BouCas} applied with time $T$, we have
$$\int_t^Th(r,X^{t,x,n}_r,Y^{t,x,n}_r)dk^{t,x,n}_r\xrightarrow[ ]{\ *\ } \int_t^Th(r,X^{t,x}_r,Y^{t,x}_r)dk^{t,x}_r .$$
Since $M^{t,x,n}\xrightarrow[ S]{\ *\ }  M^{t,x}$, using Remark 2.4 in \cite{Jak}, we get $M^{t,x,n}_T\to M^{t,x}_T$ in law.
We now pass to the limit in
\begin{eqnarray*}
	Y^{t,x,n}_{t}&=& g(X^{t,x,n}_T)+\int_t^Tf(r,X^{t,x,n}_r,Y^{t,x,n}_r)dr-M^{t,x,n}_T\\
	&&+\int_t^Th(r,X^{t,x,n}_r,Y^{t,x,n}_r)dk^{t,x,n}_r,
\end{eqnarray*}	
 we deduce that
\begin{eqnarray*}
	Y^{t,x}_{t}&=& g(X^{t,x}_T)+\int_t^Tf(r,X^{t,x}_r,Y^{t,x}_r)dr-M^{t,x}_T\\
	&&+\int_t^Th(r,X^{t,x}_r,Y^{t,x}_r)dk^{t,x}_r.
\end{eqnarray*}
Which ends the proof
\end{proof}

We extend $(Y^{t,x},U^{t,x})$ and $M^{X^{t,x}}$ to $[0,t)$ as follows
$$Y^{t,x}_s:=Y^{t,x}_t,\quad U^{t,x}_s:=0\quad \text{and}\quad M^{X^{t,x}}_s:=0,\quad \forall\, s\, \in\, [0,t).$$
We now state a continuity property of the mappings $(t,x)\to Y^{t,x}$.
\begin{proposition}\label{limYmn}
Assume $(A.1)-(A.3)$ and $(A.5)$. For a sequence $(t_n,x_n)$ converging to $(t,x)$, there exists a subsequence $(t_{n_k},x_{n_k})_{k\in \mathbb{N}}$ such that $Y^{t_{n_k},x_{n_k}}\xrightarrow[S ]{\ *\ } Y^{t,x}$.
\end{proposition}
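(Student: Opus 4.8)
The plan is to reproduce the scheme of the proof of Proposition \ref{cv Y^n}, with the penalization index $n$ now replaced by the initial data $(t_n,x_n)$: establish uniform estimates and $S$-tightness for the backward data, extract a convergent subsequence, identify the limit through a martingale-problem argument, and conclude by uniqueness of the limiting BSDE. The genuinely new ingredients, all furnished by Section 2, are the continuity in law of $(X^{t,x},K^{t,x})$ with respect to $(t,x)$, the induced convergence $k^{t_n,x_n}\xrightarrow[U]{\ *\ }k^{t,x}$, and the moment bounds $\sup_{n}\mathbb{E}\sup_{s\in[0,T]}|X^{t_n,x_n}_s|^{2q}<\infty$ and $\sup_{n}\mathbb{E}|K^{t_n,x_n}|_T^{q}<\infty$.

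First I would set $M^{t_n,x_n}_s:=\int_{t_n}^{s}U^{t_n,x_n}_r\,dM^{X^{t_n,x_n}}_r$ and $H^{t_n,x_n}_s:=\int_{t_n}^{s}h(r,X^{t_n,x_n}_r,Y^{t_n,x_n}_r)\,dk^{t_n,x_n}_r$, in analogy with Proposition \ref{cv Y^n}. Combining the forward moment bounds above with assumption $(A.5)$, the standard estimates for \eqref{baklimit} (see \cite{BouCas}) give
\begin{equation*}
\sup_{n}\mathbb{E}\sup_{t_n\le s\le T}|Y^{t_n,x_n}_s|^2+\sup_{n}\mathbb{E}\int_{t_n}^{T}\|U^{t_n,x_n}_s\sigma(X^{t_n,x_n}_s)\|^2\,ds<\infty,
\end{equation*}
and computing the conditional variation $CV_T$ exactly as in Proposition \ref{cv Y^n} shows that $(Y^{t_n,x_n},M^{t_n,x_n},H^{t_n,x_n})$ is tight for the $S$-topology.

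Next, since $(X^{t_n,x_n},K^{t_n,x_n})\xrightarrow[U\times U]{\ *\ }(X^{t,x},K^{t,x})$ by the continuity in law with respect to the initial data, Prokhorov's theorem provides a subsequence $(t_{n_k},x_{n_k})$ and limits $(\bar Y,\bar M,\bar H)$ with
\begin{equation*}
(X^{t_{n_k},x_{n_k}},K^{t_{n_k},x_{n_k}},Y^{t_{n_k},x_{n_k}},M^{t_{n_k},x_{n_k}},H^{t_{n_k},x_{n_k}})\xrightarrow[U\times U\times S\times S\times S]{\ *\ }(X^{t,x},K^{t,x},\bar Y,\bar M,\bar H).
\end{equation*}
After passing to a Skorohod representation, I would identify the limit as in Proposition \ref{cv Y^n}: the driver term $\int_{s}^{T}f(r,X^{t_{n_k},x_{n_k}}_r,Y^{t_{n_k},x_{n_k}}_r)\,dr$ is handled by mollifying $f$ and splitting into $J_1,J_2,J_3$, the discontinuity in $x$ being absorbed through the non-degeneracy of $\sigma\sigma^*$ and Krylov's inequality; the term $\int_{s}^{T}h(r,X^{t_{n_k},x_{n_k}}_r,Y^{t_{n_k},x_{n_k}}_r)\,dk^{t_{n_k},x_{n_k}}_r$ is handled by the Lipschitz continuity of $h$, the convergence $k^{t_{n_k},x_{n_k}}\xrightarrow[U]{\ *\ }k^{t,x}$ and Lemma 3.3 in \cite{BouCas}; and the fact that $M^{X^{t,x}}$ is an $\mathcal{F}^{X^{t,x},\bar Y,\bar M}$-martingale is recovered by testing It\^o's formula against $\psi_{s_1}\varphi$. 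Thus $(\bar Y,\bar M)$ solves \eqref{baklimit} with data $(t,x)$, and by uniqueness of that solution under $(A.5)$ (see \cite{PardZhang}) one gets $\bar Y=Y^{t,x}$, whence $Y^{t_{n_k},x_{n_k}}\xrightarrow[S]{\ *\ }Y^{t,x}$.

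The step I expect to be the main obstacle is the identification of the driver limit: because $f$ is only measurable in $x$, the map $(x,y)\mapsto\int_0^T f(r,x(r),y(r))\,dr$ is not continuous on $\mathcal{C}([0,T],\mathbb{R}^d)\times\mathcal{D}([0,T],\mathbb{R}^k)$, so one cannot pass to the limit directly and must instead rely on the mollification-plus-Krylov control of Proposition \ref{cv Y^n}; keeping track of the stopping times $\tau_R^{n_k}$ and the uniform integrability of the limit $\bar Y$ (via Lemma A.2 in \cite{LeJay}) is where the care is concentrated.
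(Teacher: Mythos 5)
Your proposal is correct and follows essentially the same route as the paper: uniform $L^2$ estimates on $(Y^{t_n,x_n},U^{t_n,x_n}\sigma)$, a conditional-variation bound giving $S$-tightness of $(Y^{t_n,x_n},M^{t_n,x_n},H^{t_n,x_n})$, joint convergence along a subsequence using the continuity in law of $(X^{t,x},K^{t,x})$ in the initial data, and identification of the limit by the same mollification/Krylov, Lemma 3.3 of \cite{BouCas}, and martingale-problem arguments as in Proposition \ref{cv Y^n}, concluding $\bar Y=Y^{t,x}$. The paper's own proof is just a compressed version of this, deferring the identification step verbatim to Proposition \ref{cv Y^n}.
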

\begin{proof}
	We denote $(Y^{t_n,x_n},X^{t_n,x_n},k^{t_n,x_n})=(Y^{n},X^{n},k^{n})$.
 We have for all $t\in[0,T]$
	\begin{eqnarray*}
	&&\mathbb{E}\sup_{0\le s\le T}|Y^{n}_s|^2+\mathbb{E}\int_0^T\|U^{n}_s\sigma(X^{n}_s)\|^2ds<C,\quad \forall n \in \mathbb{N},\\
	&&\mathbb{E}\sup_{0\le s\le T}|Y_s|^2+\mathbb{E}\int_0^T\|U_s\sigma(X_s)\|^2ds<C,
	\end{eqnarray*}
where $C$ is a constant independent of and $n$, see \cite{PardZhang}. We compute the conditional variation defined by $(\ref{Conditional var})$ in Appendix, we get
\begin{equation*}
\sup_{n\ge 0}\left(\CV_T(Y^{n})+\mathbb{E}\sup_{0\le s\le T}|Y^{n}_s|+\mathbb{E}\sup_{0\le s\le T}|M^{n}_s|+\CV_T(H^{n})+\mathbb{E}\sup_{0\le s\le T}|H^{n}_s| \right) <\infty.
\end{equation*}
Then, $(Y^{n},M^{n},H^{n})$ is tight with respect to the $S-$topology. So there exists a subsequence still denoted by $(Y^{n},M^{n},H^{n})$ and  $(\bar{Y},\bar{M},\bar{H})$ in $(\mathcal{D}([0,T],\mathbb{R}^k))^3$, such that
$$(X^{t_n,x_n},K^{t_n,x_n},Y^{n},M^{n},H^{n})\xrightarrow[U\times U \times S\times S\times S]{\ *\ }  (X^{t,x},K^{t,x},\bar{Y},\bar{M},\bar{H}).$$
The same arguments used in the proof of Proposition \ref{cv Y^n} ensure that for all $s\in [t,T]$
\begin{eqnarray*}
\bar{Y}_s&=& g(X^{t,x}_T)+\int_s^T1_{[t,T]}f(r,X^{t,x}_r,\bar{Y}_r)dr-(\bar{M}_T-\bar{M}_s)+\int_s^Th(r,X^{t,x}_r,\bar{Y}_r)dk^{t,x}_r
\end{eqnarray*}
and
$$Y^{t,x}=\bar{Y}\quad \mbox{and}\quad M^{t,x}=\bar{M}.$$
Which ends the proof
\end{proof}

The next result will be employed in the sequel.
\begin{proposition}\label{def u^n and u} Under assumptions $(A.1)$-$(A.5)$. The functions $u^n:(t,x)\in [0,T]\times \mathbb{R}^d\mapsto u^{n}(t,x):=Y^{t,x,n}_t$ and $u: (t,x)\in [0,T]\times \bar{D}\mapsto u(t,x):=Y^{t,x}_t$ are continuous.	
\end{proposition}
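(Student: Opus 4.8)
The plan is to reduce the continuity of both functions to the convergence, at the initial time, of the $S$-limits produced in Propositions \ref{cv Y^n} and \ref{limYmn}, combined with a subsequence argument. First recall that, with the convention extending the backward processes to $[0,t)$ by their value at $t$, one has $u(t,x)=Y^{t,x}_t=Y^{t,x}_0$ and $u^n(t,x)=Y^{t,x,n}_t=Y^{t,x,n}_0$ (the latter after extending $Y^{t,x,n}$ in the same way). Both quantities are deterministic constants, since $Y^{t,x}_t$ and $Y^{t,x,n}_t$ are measurable with respect to the trivial germ $\sigma$-field at the initial time; hence convergence in law of these numbers is the same as ordinary numerical convergence. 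The mechanism I would use throughout is the following: if $Z^m\xrightarrow[S]{*}Z$ in $\mathcal{D}([0,T],\mathbb{R}^k)$, then, since the evaluation at $s=0$ is continuous for the $S$-topology (Remark 2.4 in \cite{Jak}), $Z^m_0\to Z_0$ in law, and when the limit is constant this gives $Z^m_0\to Z_0$ as numbers.

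For the continuity of $u$, fix $(t,x)$ and let $(t_n,x_n)\to(t,x)$. It suffices to show that every subsequence of $(u(t_n,x_n))_n$ has a further subsequence converging to $u(t,x)$. Given any subsequence, it still converges to $(t,x)$, so Proposition \ref{limYmn} provides a further subsequence $(t_{n_k},x_{n_k})$ with $Y^{t_{n_k},x_{n_k}}\xrightarrow[S]{*}Y^{t,x}$. Evaluating at $s=0$ as above yields $u(t_{n_k},x_{n_k})=Y^{t_{n_k},x_{n_k}}_0\to Y^{t,x}_0=u(t,x)$. By the subsequence principle, $u(t_n,x_n)\to u(t,x)$, so $u$ is continuous on $[0,T]\times\bar D$.

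For the continuity of $u^n$ at fixed $n$, I would establish the penalized analogue of Proposition \ref{limYmn}: for $(t_m,x_m)\to(t,x)$ there is a subsequence with $Y^{t_m,x_m,n}\xrightarrow[S]{*}Y^{t,x,n}$. This is obtained by re-running the proof of Proposition \ref{cv Y^n} with $n$ frozen, replacing the convergence of the penalized SDE to the reflected one by the continuity in law $(X^{t_m,x_m,n},K^{t_m,x_m,n})\xrightarrow[U\times U]{*}(X^{t,x,n},K^{t,x,n})$ established earlier, together with $k^{t_m,x_m,n}\xrightarrow[U]{*}k^{t,x,n}$ via Lemma \ref{Helly-Bray}. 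Namely, the uniform estimates on $(Y^{t_m,x_m,n},U^{t_m,x_m,n})$ give $S$-tightness; a convergent subsequence is extracted; its limit is identified with the unique solution of BSDE \eqref{bakseq} driven by $(X^{t,x,n},k^{t,x,n})$ through the same mollification-plus-Krylov treatment of the discontinuous $f$ and the Helly--Bray passage for the $h\,dk^{t,x,n}$ term. Evaluating at $s=0$ and invoking the subsequence principle as before then yields $u^n(t_m,x_m)\to u^n(t,x)$.

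The main obstacle is precisely the passage from $S$-convergence of the whole trajectories to convergence of their values at the initial time: the $S$-topology is strictly weaker than the uniform one, so a priori the limiting marginal at $s=0$ need not be the evaluation of the limit process. This is exactly what the continuity of the time-$0$ projection for the $S$-topology (Remark 2.4 in \cite{Jak}) and the determinism of the backward solutions at the initial time resolve. A secondary difficulty, in the $u^n$ case, is that the identification of the $S$-limit as a genuine BSDE solution must be redone in spite of the discontinuity of $f$ in $x$, which is handled by the mollification and Krylov-estimate argument already used in the proof of Proposition \ref{cv Y^n}.
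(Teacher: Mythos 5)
There is a genuine gap, and it sits at the very center of your argument. You claim that ``the evaluation at $s=0$ is continuous for the $S$-topology (Remark 2.4 in \cite{Jak})'' and use this to pass from $Z^m\xrightarrow[S]{*}Z$ to $Z^m_0\to Z_0$. Remark 2.4 in \cite{Jak} (reproduced as Remark \ref{rem21} in the paper) asserts the continuity of the projection $\pi_T$ at the \emph{terminal} time, not at $0$: in the characterization of $S$-convergence the countable exceptional set $Q$ is a subset of $[0,T)$ and is perfectly allowed to contain $0$. Indeed $\pi_0$ is \emph{not} $S$-continuous: the deterministic paths $x^m=1_{[0,1/m)}$ converge to $0$ in the $S$-topology while $x^m(0)=1$ for every $m$. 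So the step ``$Y^{t_{n_k},x_{n_k}}\xrightarrow[S]{*}Y^{t,x}$ hence $Y^{t_{n_k},x_{n_k}}_0\to Y^{t,x}_0$'' does not follow; for $t>0$ one could try to repair it by picking a point $s\in(0,t)\setminus Q$ where both extended processes are still constant, but this breaks down at $t=0$ and in any case is not what you wrote.

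The paper circumvents exactly this obstruction by never evaluating the $S$-limit at time $0$ directly: it writes $u(t_n,x_n)=Y^{t_n,x_n}_0$ through the BSDE over the whole interval $[0,T]$, namely
\begin{equation*}
u(t_n,x_n)=g(X^{t_n,x_n}_T)+\int_0^T 1_{[t_n,T]}f(r,X^{t_n,x_n}_r,Y^{t_n,x_n}_r)\,dr-M^{t_n,x_n}_T+\int_0^T h(r,X^{t_n,x_n}_r,Y^{t_n,x_n}_r)\,dk^{t_n,x_n}_r,
\end{equation*}
and passes to the limit term by term: the martingale term via the $S$-continuity of $\pi_T$ (the legitimate half of Remark 2.4), the $f$-integral via Krylov's inequality for reflected diffusions, and the boundary term via Lemma 3.3 in \cite{BouCas}. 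You should replace your ``evaluation at $0$'' mechanism by this representation. The remaining structure of your proposal (the subsequence principle, the observation that the quantities are deterministic so convergence in law implies numerical convergence, and the plan to rerun the argument with $n$ frozen for $u^n$) is sound and consistent with the paper.
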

\begin{proof}
	We will show only that the function $u$ is continuous. Let $(t_n,x_n)\to (t,x)$, as $n\to +\infty$. From the proof of Proposition \ref{limYmn}, we can extract a subsequence still denoted $(t_n,x_n)$, such that
	$$(X^{t_n,x_n},K^{t_n,x_n},Y^{t_n,x_n},M^{t_n,x_n})\xrightarrow[ U\times U \times S \times S]{\ *\ } (X^{t,x},K^{t,x},Y^{t,x},M^{t,x})$$
By Lemma 3.3 in \cite{BouCas} applied for $t=T$, we have
$$\int_0^Th(r,X^{t_n,x_n}_r,Y^{t_n,x_n}_r)dk^{t_n,x_n}_r\xrightarrow[ ]{\ *\ } \int_0^Th(r,X^{t,x}_r,Y^{t,x}_r)dk^{t,x}_r $$
Since $M^{t_n,x_n}\xrightarrow[ S]{\ *\ }  M^{t,x}$, using Remark 2.4 in \cite{Jak}, we get $M^{t_n,x_n}_T\to M^{t,x}_T$. By virtue of Krylov's inequality for reflected diffusions, we can show that $\int_0^T1_{[t_n,T]}f(r,X^{t_n,x_n}_r,Y^{t_n,x_n}_r)dr\to \int_0^T1_{[t,T]}f(r,X^{t,x}_r,Y^{t,x}_r)dr$ in law .
 We now pass to the limit in
\begin{eqnarray*}
u(t_n,x_n)=Y^{t_n,x_n}_{t_n}&=& Y^{t_n,x_n}_0=g(X^{t_n,x_n}_T)+\int_0^T1_{[t_n,T]}f(r,X^{t_n,x_n}_r,Y^{t_n,x_n}_r)dr-M^{t_n,x_n}_T\\
&&+\int_0^Th(r,X^{t_n,x_n}_r,Y^{t_n,x_n}_r)dk^{t_n,x_n}_r.
\end{eqnarray*}	
Exactly as in the proof of the Proposition \ref{limYmn}, we deduce that the limit of $u(t_n,x_n)$, as $n\to +\infty$ is
 \begin{eqnarray*}
 u(t,x)=Y^{t,x}_{t}&=& Y^{t,x}_0=g(X^{t,x}_T)+\int_0^T1_{[t,T]}f(r,X^{t,x}_r,Y^{t,x}_r)dr-M^{t,x}_T\\
 	&&+\int_0^Th(r,X^{t,x}_r,Y^{t,x}_r)dk^{t,x}_r.
 \end{eqnarray*}
 Which is the desired result.
\end{proof}
\section{Application to nonlinear Neumann boundary value problems}
 The coefficients of our PDEs are not continuous this why we cannot define the solutions in the classical viscosity sense. We then adopt the  $L^p-$viscosity solution introduced in \cite{caf et al}. This notion of solutions is used to study  nonlinear PDEs with measurable coefficients. We first recall the definition of the $L^p-$viscosity solution for PDEs $(\ref{pensystem})$. For simplicity, we adopt the following notations
	\begin{eqnarray*}
		\mathcal{L}_n\varphi &:=& \mathcal{L}\varphi -n<\delta(.),\nabla\varphi>\\
		\bar{f}(r,x,y)&:=& f(r,x,y)-n<\nabla l(x),\delta(x)>h(r,x,y).
	\end{eqnarray*}
\begin{definition}
Let $p$ be an integer such that $p>d+2$.
\begin{enumerate}
	\item A function $u\in \mathcal{C}([0,T]\times \mathbb{R}^d, \mathbb{R}^k)$ is a $L^p-$viscosity sub-solution of the PDEs system $(\ref{pensystem})$, if for every $x\in \mathbb{R}^d$, $u_i(T,x)\le g_i(x)$, $1\le i\le k$, and for every $\varphi \in W^{1,2}_{p,loc}([0,T]\times \mathbb{R}^d)$ and $(\hat{t},\hat{x})\in (0,T]\times \mathbb{R}^d$ at which $u_i-\varphi$ has a local maximum, one has
	$$ess\liminf_{(t,x)\to (\hat{t},\hat{x})}\left\lbrace -\frac{\partial \varphi}{\partial t}(t,x)-\mathcal{L}_n\varphi(t,x)-\bar{f}_i(t,x,u(t,x))\right\rbrace\le 0. $$
	\item A function $u\in \mathcal{C}([0,T]\times \mathbb{R}^d, \mathbb{R}^k)$ is a $L^p-$viscosity super-solution of the PDEs $(\ref{pensystem})$, if for every $x\in \mathbb{R}^d$, $u_i(T,x)\ge g_i(x)$, $1\le i\le k$, and for every $\varphi \in W^{1,2}_{p,loc}([0,T]\times \mathbb{R}^d)$ and $(\hat{t},\hat{x})\in (0,T]\times \mathbb{R}^d$ at which $u_i-\varphi$ has a local minimum, one has
	$$ess\limsup_{(t,x)\to (\hat{t},\hat{x})}\left\lbrace -\frac{\partial \varphi}{\partial t}(t,x)-\mathcal{L}_n\varphi(t,x)-\bar{f}_i(t,x,u(t,x))\right\rbrace\ge 0 .$$
	\item A function  $u\in \mathcal{C}([0,T]\times \mathbb{R}^d, \mathbb{R}^k)$ is a $L^p-$viscosity solution if it is both a $L^p-$viscosity sub-solution and super-solution.
\end{enumerate}
\end{definition}
\begin{remark}
	Assertion (1) means that for every $\varepsilon>0$, $r>0$, there exists a set $A\subset B_{r}(\hat{t},\hat{x})$ of positive measure
	$$-\frac{\partial \varphi}{\partial t}(t,x)-\mathcal{L}_n\varphi(t,x)-\bar{f}_i(t,x,u(t,x))\le \varepsilon,\quad \forall (t,x)\in A.$$
\end{remark}
We now define the $L^p-$viscosity solution for system (\ref{limitsytem}), which can be seen as a natural extension of the notion of viscosity solution of PDEs with nonlinear Neumann boundary condition, to the case of PDEs with measurable coefficients.
\begin{definition}\label{DefNeumann}
	 Let $p$ be an integer such that $p>d+2$
	\begin{itemize}
		\item[$(i.)$] $u\in \mathcal{C}([0,T]\times \bar{D},\mathbb{R}^k)$ is called a $L^p-$viscosity subsolution of System $(\ref{limitsytem})$ if $u_i(T,x)\le g_i(x)$, $x\in\bar{D}$, $1\le i\le k$, and moreover for any $1\le i\le k$, $\varphi \in W^{1,2}_{p,loc}([0,T]\times \bar{D})$, and $(\hat{t},\hat{x})\in (0,T]\times\bar{D}$ at which $u_i-\varphi$ has a local maximum, one has
		$$ess\liminf_{(t,x)\to (\hat{t},\hat{x})}\left\lbrace- \frac{\partial \varphi}{\partial t}(t,x)-\mathcal{L}\varphi(t,x)-
		f_{i}(t,x,u(t,x))\right\rbrace \le 0,\quad \text{if}\quad \hat{x}\,\in\, D, $$
		\begin{eqnarray*}		
			\begin{split}
				 &ess\liminf_{(t,x)\to (\hat{t},\hat{x})}\min\left( -\frac{\partial \varphi}{\partial t}(t,x)-\mathcal{L}\varphi(t,x)-
				f_{i}(t,x,u(t,x)), \right. \\
				& \left. -\frac{\partial \varphi}{\partial n}(t,x)-h_i(t, x, u(t,x))  \right)\le 0,\quad \mbox{if}\   \hat{x}\in \partial D.
			\end{split}
		\end{eqnarray*}
		\item[$(ii.)$] $u\in \mathcal{C}([0,T]\times \bar{D},\mathbb{R}^k)$ is called a $L^p-$viscosity super-solution of (\ref{limitsytem}) if $u_i(T,x)\ge g_i(x)$, $x\in\bar{D}$, $1\le i\le k$, and moreover for any $1\le i\le k$, $\varphi \in W^{1,2}_{p,loc}([0,T]\times \bar{D})$, and $(\hat{t},\hat{x})\in (0,T]\times\bar{D}$ at which $u_i-\varphi$ has a local minimum, one has
		$$ess\limsup_{(t,x)\to (\hat{t},\hat{x})}\left\lbrace- \frac{\partial \varphi}{\partial t}(t,x) -\mathcal{L}\varphi(t,x)-
		f_{i}(t,x,u(t,x))\right\rbrace \ge 0,\quad \text{if}\quad \hat{x}\,\in\, D, $$
		\begin{eqnarray*}		
			\begin{split}
				& ess\limsup_{(t,x)\to (\hat{t},\hat{x})}\max\left(- \frac{\partial \varphi}{\partial t}(t,x) -\mathcal{L}\varphi(t,x)-
				f_{i}(t,x,u(t,x)), \right. \\
				& \left. -\frac{\partial \varphi}{\partial n}(t,x)-h_i(t, x, u(t,x))   \right)\ge 0,\quad \mbox{if}\   \hat{x}\in \partial D.
			\end{split}
		\end{eqnarray*}
		\item[$(iii.)$]  $u\in \mathcal{C}([0,T]\times \bar{D},\mathbb{R}^k)$ is called a $L^p-$viscosity solution of System (\ref{limitsytem}) if it is both a $L^p-$viscosity sub- and super-solution.	
	\end{itemize}
\end{definition}
\begin{remark}
	We remark that if the ingredients in the definition above are continuous we recover the classical viscosity solution of PDEs with Neumann boundary condition defined in \cite{PardZhang}.
\end{remark}
We are now able to state and prove our main result.
\begin{theorem}
	Under assumptions $(A.1)-(A.5)$, for $p>d+2$ the functions $u^n: [0,T]\times \mathbb{R}^d\to \mathbb{R}^k$ and $u:[0,T]\times \bar{D}\to \mathbb{R}^k$ are $L^p-$viscosity solutions respectively for systems $(\ref{pensystem})$ and $(\ref{limitsytem})$. Moreover
	$$\lim_{n\to \infty}u^n(t,x)=u(t,x),\quad \forall (t,x)\in [0,T]\times\bar{D},$$
	where $u^n$ and $u$ are defined in Proposition \ref{def u^n and u}.
\end{theorem}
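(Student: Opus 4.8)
The plan is to treat the three assertions of the theorem separately: the pointwise limit $u^{n}\to u$, and the fact that $u^{n}$ (resp. $u$) is an $L^{p}$-viscosity solution of (\ref{pensystem}) (resp. (\ref{limitsytem})). The convergence is essentially free: since $u^{n}(t,x)=Y^{t,x,n}_{t}$ and $u(t,x)=Y^{t,x}_{t}$ by definition, the last assertion of Proposition \ref{cv Y^n} yields $\lim_{n}Y^{t,x,n}_{t}=Y^{t,x}_{t}$ for each fixed starting point, i.e. $\lim_{n}u^{n}(t,x)=u(t,x)$ for every $(t,x)\in[0,T]\times\bar{D}$, which is the claimed limit. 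The genuine work is the identification of $u^{n}$ and $u$ as $L^{p}$-viscosity solutions, for which the common mechanism is the probabilistic representation combined with an It\^o--Krylov/contradiction argument. I would first record the Markov (flow) property $Y^{t,x,n}_{s}=u^{n}(s,X^{t,x,n}_{s})$ and $Y^{t,x}_{s}=u(s,X^{t,x}_{s})$ for $s\in[t,T]$. Recalling that $dk^{t,x,n}_{r}=-n\langle\nabla l,\delta\rangle(X^{t,x,n}_{r})\,dr$, the BSDE (\ref{bakseq}) then reads as a standard non-reflected BSDE driven by $X^{t,x,n}$ with generator $\bar{f}$ and forward generator $\mathcal{L}_{n}$. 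Since $\sigma\sigma^{*}\geq\alpha I$ by $(A.2)$ and $p>d+2$, Krylov's inequality and the It\^o--Krylov formula are valid for any $\varphi\in W^{1,2}_{p,loc}$ composed with the (reflected) diffusion, and the Sobolev embedding (valid precisely because $p>d+2$) makes $\nabla\varphi$ continuous, so that $\partial\varphi/\partial n=\langle\nabla l,\nabla\varphi\rangle$ is defined pointwise.

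For the penalized problem I would argue by contradiction for the subsolution property, the supersolution property being symmetric. Let $(\hat{t},\hat{x})$ be a point where $u^{n}_{i}-\varphi$ attains a local maximum, normalized to value $0$, and suppose the $ess\liminf$ in the definition is $\geq 2\varepsilon>0$, so that a.e. on a small neighborhood $B$ of $(\hat{t},\hat{x})$ one has $-\partial_{t}\varphi-\mathcal{L}_{n}\varphi-\bar{f}_{i}(\cdot,u^{n})\geq\varepsilon$. Starting $X^{\hat{t},\hat{x},n}$ at $(\hat{t},\hat{x})$ and letting $\tau$ be the exit time of $(s,X_{s})$ from $B$, I would apply It\^o--Krylov to $\varphi_{i}(s,X_{s})$ and subtract the BSDE written through the flow property. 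The local maximum forces $\varphi_{i}(\tau,X_{\tau})-Y^{n,i}_{\tau}\geq 0$ while $\varphi_{i}(\hat{t},\hat{x})-Y^{n,i}_{\hat{t}}=0$; taking expectations and using Krylov's inequality to control the $L^{p}_{loc}$ drift term gives $\mathbb{E}[\varphi_{i}(\tau,X_{\tau})-Y^{n,i}_{\tau}]\leq-\varepsilon\,\mathbb{E}[\tau-\hat{t}]<0$, contradicting the sign above since $\tau>\hat{t}$ a.s. The discontinuity of $f$ in $x$ is absorbed exactly through the mollification and Krylov estimate already used in (\ref{CV: f(Xn,Yn)}).

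For the limit problem the interior case $\hat{x}\in D$ is identical, taking $B\subset[0,T]\times D$ so that the reflected diffusion does not feel the boundary (hence $k^{t,x}$ is constant on $[\hat{t},\tau]$) and the forward generator is $\mathcal{L}$. The boundary case $\hat{x}\in\partial D$ is the delicate one: here It\^o--Krylov applied to $\varphi_{i}(s,X^{t,x}_{s})$ produces the extra term $\int\langle\nabla\varphi_{i},\nabla l\rangle\,dk^{t,x}_{r}=\int(\partial\varphi_{i}/\partial n)\,dk^{t,x}_{r}$, whereas the BSDE (\ref{baklimit}) contributes $\int h_{i}\,dk^{t,x}_{r}$, so the $dk^{t,x}$-integrand is precisely $\partial\varphi_{i}/\partial n+h_{i}$. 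Violating the $\min$ in Definition \ref{DefNeumann} means both $-\partial_{t}\varphi_{i}-\mathcal{L}\varphi_{i}-f_{i}\geq\varepsilon$ and $-\partial\varphi_{i}/\partial n-h_{i}\geq\varepsilon$ a.e. near $(\hat{t},\hat{x})$; taking expectations then yields $\mathbb{E}[\varphi_{i}(\tau,X_{\tau})-Y^{i}_{\tau}]\leq-\varepsilon\,\mathbb{E}[\tau-\hat{t}]-\varepsilon\,\mathbb{E}[k^{t,x}_{\tau}-k^{t,x}_{\hat{t}}]<0$, again contradicting the nonnegativity forced by the local maximum.

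The step I expect to be the main obstacle is the boundary analysis in the last paragraph, for three reasons. First, one must rigorously justify the It\^o--Krylov formula and Krylov's inequality for $W^{1,2}_{p,loc}$ test functions along the reflected diffusion with merely measurable $b,\sigma$, which relies on $(A.2)$, on $p>d+2$, and on the reflecting-diffusion Krylov estimate (Theorem 5.1 in \cite{LauSlo}). Second, the boundary integrand involves $\nabla\varphi$, so the argument genuinely needs the $C^{1}$-in-space regularity furnished by the Sobolev embedding at $p>d+2$. Third, and most importantly, the contradiction must be genuine: starting from a boundary point one needs $\mathbb{E}[k^{t,x}_{\tau}-k^{t,x}_{\hat{t}}]>0$ so that at least one of the two strictly negative contributions survives, which requires that the non-degenerate reflected diffusion accumulates local time immediately off $\partial D$. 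Once these technical points are secured, the supersolution statements follow by the symmetric contradiction, and together with the convergence from Proposition \ref{cv Y^n} the theorem is complete.
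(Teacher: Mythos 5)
Your proposal is correct and follows essentially the same route as the paper: the convergence is read off from Proposition \ref{cv Y^n}, and the viscosity properties are obtained by contradiction using the Markov/flow identity $Y^{t,x,n}_s=u^n(s,X^{t,x,n}_s)$, the It\^o--Krylov formula for $W^{1,2}_{p,loc}$ test functions, Krylov's inequality to dispose of the Lebesgue-null exceptional set, and the splitting of the boundary contribution into the $\partial\varphi/\partial n+h_i$ integrand against $dk^{t,x}$. The only cosmetic difference is that you conclude by taking expectations directly, where the paper invokes a BSDE comparison theorem (Remark 2.5 in \cite{Pardoux}, Theorem 1.4 in \cite{PardZhang}); both are valid since the drivers involved are given processes.
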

We divide the proof of Theorem 15 in two lemmas and the convergence is ensured by Proposition 10.
\begin{lemma}
	The function $u^n$ is a $L^p-$viscosity solution of system $(\ref{pensystem})$.
\end{lemma}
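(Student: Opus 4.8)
The plan is to reduce the penalized reflected system to a standard (non-reflected) Markovian forward--backward system, and then transfer the BSDE representation of $u^n$ to the PDE \eqref{pensystem} by a Krylov--It\^o localization argument. First I would observe that, since $K^{t,x,n}_s=\int_t^s-n\delta(X^{t,x,n}_r)\,dr$, the integrator of the boundary term satisfies $dk^{t,x,n}_r=-n<\nabla l(X^{t,x,n}_r),\delta(X^{t,x,n}_r)>dr$, so that
$$\int_s^Th(r,X^{t,x,n}_r,Y^{t,x,n}_r)\,dk^{t,x,n}_r=-n\int_s^T<\nabla l(X^{t,x,n}_r),\delta(X^{t,x,n}_r)>h(r,X^{t,x,n}_r,Y^{t,x,n}_r)\,dr.$$
Hence \eqref{bakseq} collapses to the Markovian BSDE
$$Y^{t,x,n}_s=g(X^{t,x,n}_T)+\int_s^T\bar{f}(r,X^{t,x,n}_r,Y^{t,x,n}_r)\,dr-\int_s^TU^{t,x,n}_r\,dM^{X^{t,x,n}}_r,$$
driven by the forward diffusion $X^{t,x,n}$ whose generator is precisely $\mathcal{L}_n$. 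Since $f$ is measurable in $x$ and continuous in $(t,y)$ while $h$, $\delta$ and $\nabla l$ are Lipschitz (resp. bounded $\mathcal{C}^1$), the modified driver $\bar{f}$ inherits these properties, and the equation to be verified is exactly $\partial_t u^n_i+\mathcal{L}_n u^n_i+\bar{f}_i(t,x,u^n)=0$ with $u^n(T,\cdot)=g$.

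Next I would record the two structural facts needed for the localization. By Proposition \ref{def u^n and u} the function $u^n$ is continuous, and by the strong Markov property of $X^{t,x,n}$ (available under $(A.4)$) together with uniqueness for the BSDE above one obtains the flow identity $Y^{t,x,n}_s=u^n(s,X^{t,x,n}_s)$ for all $s\in[t,T]$, $\mathbb{P}$-a.s. In particular $\bar{f}_i(r,X^{t,x,n}_r,Y^{t,x,n}_r)=\bar{f}_i(r,X^{t,x,n}_r,u^n(r,X^{t,x,n}_r))$ along the trajectory, which is what lets the a.e. information about the PDE be read off the random path.

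To prove the subsolution property I would argue by contradiction. Fix $\varphi\in W^{1,2}_{p,loc}$ and $(\hat t,\hat x)$ at which $u^n_i-\varphi$ has a local maximum; normalising I may assume $u^n_i(\hat t,\hat x)=\varphi(\hat t,\hat x)$ and $u^n_i\le\varphi$ on a ball $B_r(\hat t,\hat x)$. Unwinding the definition of the essential lower limit, if it were strictly positive there would exist $\varepsilon>0$ and $r>0$ with
$$\partial_t\varphi(t,x)+\mathcal{L}_n\varphi(t,x)+\bar{f}_i(t,x,u^n(t,x))\le-\varepsilon\quad\text{for a.e. }(t,x)\in B_r(\hat t,\hat x).$$
Let $\tau$ be the first exit of $(s,X^{\hat t,\hat x,n}_s)$ from $B_r(\hat t,\hat x)$. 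Applying the Krylov--It\^o formula for $W^{1,2}_{p,loc}$ functions to $\varphi(s,X^{\hat t,\hat x,n}_s)$ on $[\hat t,\tau]$ (legitimate because $\sigma\sigma^{*}\ge\alpha I$ by $(A.2)$ and $p>d+2$), subtracting it from the BSDE representation on the same interval and using $u^n_i(\hat t,\hat x)=\varphi(\hat t,\hat x)$, the stochastic integrals cancel and, after taking expectations,
$$0=\mathbb{E}\big[(u^n_i-\varphi)(\tau,X_\tau)\big]+\mathbb{E}\int_{\hat t}^{\tau}\big[\partial_r\varphi+\mathcal{L}_n\varphi+\bar{f}_i(r,X_r,u^n(r,X_r))\big]\,dr.$$
The first expectation is $\le0$ because $u^n_i\le\varphi$ on $\overline{B_r(\hat t,\hat x)}$, while Krylov's estimate guarantees that the path spends zero expected time in the Lebesgue-null exceptional set, so the integrand is $\le-\varepsilon$ for a.e. $r$ and the integral term is $\le-\varepsilon\,\mathbb{E}[\tau-\hat t]<0$. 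This gives $0<0$, the desired contradiction. The supersolution property follows by the symmetric argument (local minimum, reversed inequalities, essential upper limit), and combining the two yields the claim.

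The main obstacle is the final step: one must apply It\^o's formula to a test function that is only $W^{1,2}_{p,loc}$ composed with a diffusion whose coefficients are merely measurable, which is exactly the Krylov--It\^o formula and relies on the uniform non-degeneracy $(A.2)$ and $p>d+2$; and one must upgrade the \emph{pointwise a.e.} PDE inequality to an inequality \emph{along the random trajectory}, which is precisely where Krylov's inequality for (penalized, non-degenerate) diffusions is invoked to discard the occupation of Lebesgue-null sets.
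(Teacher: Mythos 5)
Your proposal is correct and follows essentially the same route as the paper: reduce to the Markovian BSDE with driver $\bar f$, use the flow identity $Y^{t,x,n}_s=u^n(s,X^{t,x,n}_s)$, localize at the exit time of a ball, apply the It\^o--Krylov formula to the $W^{1,2}_{p,loc}$ test function, and invoke Krylov's estimate to discard the Lebesgue-null exceptional set before deriving a contradiction. The only cosmetic difference is at the final step, where you take expectations directly (legitimate since the localized drivers do not depend on the unknown), whereas the paper invokes the BSDE comparison theorem to conclude $\hat Y_{\hat t}>\bar Y_{\hat t}$; both yield the same contradiction.
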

\begin{proof} The proof will follow the techniques used in Proposition 5.1 in \cite{BahElouPard}. Let $\varphi\in W^{1,2}_{p,loc}\left( [0,T]\times\mathbb{R}^d\right) $, let $(\hat{t},\hat{x})\in [0,T]\times\mathbb{R}^d$ be a point which is a local maximum of $u^n_i-\varphi$. Since $p>d+2$, then $\varphi$ admits a continuous version which we consider from now on. We assume without loss of generality that
\begin{equation}\label{Assm contract}
u^n_i(\hat{t},\hat{x})=\varphi(\hat{t},\hat{x}).
\end{equation}
We will argue by contradiction. Assume that there exists $\varepsilon,\, \alpha\, >0$ such that
\begin{equation}\label{Assm contradiction}
\frac{\partial \varphi}{\partial t}(t,x) +\mathcal{L}_n\, \varphi(t,x)+
\bar{f}_i(t,x,u^{n}(t,x))<-\varepsilon,\quad \lambda-a.e\, \text{in}\, B_{\alpha}(\hat{t},\hat{x})
\end{equation}
where $\lambda$ denote the Lebesgue measure and $B_{\alpha}(\hat{t},\hat{x})$ is the ball of centre $(\hat{t},\hat{x})$ and radius $\alpha$. Since $(\hat{t},\hat{x})$ is a local maximum of $u^n_i-\varphi$, we find a positive number $\alpha'$ (which we can suppose
equal to $\alpha$) such that
	\begin{equation}
	u^n_i(t,x)\le \varphi(t,x)\qquad \text{for all}\, (t,x)\, \text{in}\ B_\alpha(\hat{t},\hat{x}).
	\end{equation}
	Define the stopping time
	\begin{equation*}
	\tau=\inf\{s\ge \hat{t};\ |X^{\hat{t},\hat{x},n}_s-\hat{x}|>\alpha\}\wedge(\hat{t}+\alpha)
	\end{equation*}
	Since $X^{t,x,n}$ is a Markov diffusion, one can show, as in \cite{Elkaroui}, that for every $r\in[\hat{t},\hat{t}+\alpha]$, $Y^{\hat{t},\hat{x},n}_r=u^{n}(r,X^{\hat{t},\hat{x},n}_r)$. Hence, the process $(\bar{Y}_s,\bar{U}_s):=\left(Y^{\hat{t},\hat{x},n,i}_s,\,1_{[0,\tau]}U^{\hat{t},\hat{x},n,i}_s \right)_{s\in [\hat{t},\hat{t}+\alpha]} $ solves the BSDE for every $s\in [\hat{t},\hat{t}+\alpha]$
	\begin{eqnarray}
	 \bar{Y}_s=u^n_i(\tau,X^{\hat{t},\hat{x},n}_\tau)+\int_{s}^{\hat{t}+\alpha}1_{[0,\tau]}\bar{f}_i(r,X^{\hat{t},\hat{x},n}_r,u^{n}(r,X^{\hat{t},\hat{x},n}_r))\, dr-\, \int_{s}^{\hat{t}+\alpha}\bar{U}_r\, dM^{X^{\hat{t},\hat{x},n}}_r.
	\end{eqnarray}
On the other hand, by It\^o-Krylov's formula (see Chap. 2  Sec. 2 and 3 \cite{Kry2}), the process $(\hat{Y}_s,\hat{U}_s)_{s\in [\hat{t},\hat{t}+\alpha]}$ defined by $$(\hat{Y}_s,\hat{U}_s):=\left(\varphi(s\wedge\tau,X^{\hat{t},\hat{x},n}_{s\wedge\tau}),\,1_{[0,\tau]}\nabla\varphi(s,X^{\hat{t},\hat{x},n}_s) \right) $$	
satisfies
\begin{eqnarray*}
\hat{Y}_s&=& \varphi(\tau,X^{\hat{t},\hat{x},n}_\tau)-\, \int_{s}^{\hat{t}+\alpha}1_{[0,\tau]}\left( \frac{\partial \varphi}{\partial r}+\mathcal{L}_n\varphi\right) (r,X^{\hat{t},\hat{x},n}_r)\, dr-\, \int_{s}^{\hat{t}+\alpha}\hat{U}_r dM^{X^{\hat{t},\hat{x},n}}_r.
\end{eqnarray*}
By the choice of $\tau$, $(\tau,X^{\hat{t},\hat{x},n}_\tau)\in B_\alpha(\hat{t},\hat{x})$, then $u^n_i(\tau,X^{\hat{t},\hat{x},n}_\tau)\le \varphi(\tau,X^{\hat{t},\hat{x},n}_\tau)$. \\
Consider the set
$$A:=\{(t,x)\in B_\alpha(\hat{t},\hat{x}),\, \left(\frac{\partial \varphi}{\partial t}+\mathcal{L}_n\varphi+\bar{f}_i(.,.,u^n(.,.)) \right)(t,x)<-\varepsilon \}$$
and
$A^c:= B_\alpha(\hat{t},\hat{x})\backslash A$ is
the complement of A. By assumption $(\ref{Assm contradiction} )$ we get $\lambda(A^c)=0$. Since the process $X^{\hat{t},\hat{x},n}$ is nodegenerate, Krylov's inequality (see Chap. 2  Sec. 2 and 3 \cite{Kry2}) implies that $1_{A^c}(r,X^{\hat{t},\hat{x},n}_r)=0\,\, dr\times\mathbb{P}-a.e$. It follows that
\begin{eqnarray*}
0<\mathbb{E}(\tau -\hat{t})\varepsilon \le \mathbb{E}\int_{\hat{t}}^{\hat{t}+\alpha}-1_{[0,\tau]}\left[\left(\frac{\partial\varphi}{\partial r}+\mathcal{L}_n\varphi \right)(r,X^{\hat{t},\hat{x},n}_r)+\bar{f}_i(r,X^{\hat{t},\hat{x},n}_r,u^n(r,X^{\hat{t},\hat{x},n}_r))  \right]\, dr
\end{eqnarray*}
This implies that
$$-1_{[0,\tau]}\left[\left(\frac{\partial\varphi}{\partial r}+\mathcal{L}_n\varphi \right)(r,X^{\hat{t},\hat{x},n}_r)+\bar{f}_i(r,X^{\hat{t},\hat{x},n}_r,u^n(r,X^{\hat{t},\hat{x},n}_r))  \right]>0$$
on a set of $dt\times d\mathbb{P}-$positive measure. Therefore, the comparison theorem in Remark 2.5 in \cite{Pardoux} shows that $\hat{Y}_{\hat{t}}>\bar{Y}_{\hat{t}}$, that is $\varphi(\hat{t},\hat{x})>u^n_i(\hat{t},\hat{x})$, which contradicts assumption $(\ref{Assm contract})$.
\end{proof}
\begin{lemma}
	The function $u$ is a $L^p-$viscosity solution of system $(\ref{limitsytem})$ in the sense of Definition \ref{DefNeumann}.
\end{lemma}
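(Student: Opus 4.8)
The plan is to follow the scheme of the preceding lemma, now carrying the reflection term through the argument, and to split the verification according to whether the contact point $\hat{x}$ lies in the interior $D$ or on the boundary $\partial D$. First, $u\in\mathcal{C}([0,T]\times\bar{D},\mathbb{R}^k)$ by Proposition \ref{def u^n and u}, so it is an admissible candidate. Throughout I use the flow identity $Y^{t,x}_s=u(s,X^{t,x}_s)$ for $s\in[t,T]$, which follows from the Markov property of the reflected diffusion $X^{t,x}$ (as in \cite{Elkaroui}) together with the uniqueness of the solution of \eqref{baklimit}; this is what lets me compare the value process against a test function along the trajectory. I will only treat the subsolution property, the supersolution one being symmetric.

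\textbf{Interior case.} When $\hat{x}\in D$, choose $\alpha>0$ so small that $B_\alpha(\hat{t},\hat{x})\subset(0,T]\times D$. With the stopping time $\tau=\inf\{s\ge\hat{t}:|X^{\hat{t},\hat{x}}_s-\hat{x}|>\alpha\}\wedge(\hat{t}+\alpha)$, the trajectory stays in $D$ on $[\hat{t},\tau]$, so $k^{\hat{t},\hat{x}}$ does not increase there and the reflection term drops out. The argument then becomes verbatim the one of the previous lemma: assuming $\frac{\partial\varphi}{\partial t}+\mathcal{L}\varphi+f_i(\cdot,u)<-\varepsilon$ a.e. on $B_\alpha(\hat{t},\hat{x})$, I write the localized BSDE for $\bar{Y}_s=Y^{\hat{t},\hat{x},i}_s$ and the one satisfied by $\hat{Y}_s=\varphi(s\wedge\tau,X^{\hat{t},\hat{x}}_{s\wedge\tau})$ via the It\^o--Krylov formula (\cite{Kry2}), use the local maximum to order the terminal values $u_i(\tau,X_\tau)\le\varphi(\tau,X_\tau)$, invoke Krylov's inequality to discard the Lebesgue-null bad set, and conclude by the comparison theorem (Remark 2.5 in \cite{Pardoux}) that $\varphi(\hat{t},\hat{x})>u_i(\hat{t},\hat{x})$, a contradiction.

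\textbf{Boundary case.} The delicate point is $\hat{x}\in\partial D$, where I must reach the min-condition of Definition \ref{DefNeumann}. Arguing by contradiction, I assume there are $\varepsilon,\alpha>0$ with both $\frac{\partial\varphi}{\partial t}+\mathcal{L}\varphi+f_i(\cdot,u)<-\varepsilon$ and $\frac{\partial\varphi}{\partial n}+h_i(\cdot,u)<-\varepsilon$ on $B_\alpha(\hat{t},\hat{x})$. The It\^o--Krylov formula now produces the extra boundary term, so $\hat{Y}_s=\varphi(s\wedge\tau,X_{s\wedge\tau})$ solves a generalized BSDE whose driver is $\frac{\partial\varphi}{\partial r}+\mathcal{L}\varphi$ against $dr$ and $\langle\nabla\varphi,\nabla l\rangle=\frac{\partial\varphi}{\partial n}$ against $dk^{\hat{t},\hat{x}}_r$, to be compared with the generalized BSDE \eqref{baklimit} for $\bar{Y}$ whose integrand against $dk$ is $h_i$. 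The first inequality is handled exactly as in the interior case through Krylov's estimate for the reflecting diffusion. The second inequality controls the $dk$-part: since $p>d+2$, the parabolic Sobolev embedding gives that $\nabla_x\varphi$ admits a continuous (H\"older) version, so $\langle\nabla l(x),\nabla\varphi(t,x)\rangle+h_i(t,x,u(t,x))$ is continuous and is therefore genuinely $<-\varepsilon$ throughout $B_\alpha(\hat{t},\hat{x})$; in particular it is $<-\varepsilon$ at every time $r$ for which $X_r\in\partial D$, i.e. $dk^{\hat{t},\hat{x}}_r$-a.e. This is precisely where the $L^p$-viscosity framework with $p>d+2$ is used, and it removes the need for any Krylov-type inequality against the boundary measure $dk$. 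With both drivers of $\hat{Y}$ strictly dominating those of $\bar{Y}$ and the terminal values ordered by the local maximum property, the comparison theorem for generalized BSDEs yields again $\varphi(\hat{t},\hat{x})>u_i(\hat{t},\hat{x})$, the desired contradiction.

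\textbf{Main obstacle.} The principal difficulty is the boundary term: the reflection measure $dk^{\hat{t},\hat{x}}$ is carried by the set $\{X_r\in\partial D\}$, which is Lebesgue-null in space, so the a.e. information furnished by the $L^p$-viscosity inequalities cannot be transferred to $dk$ through Krylov's inequality alone. The resolution is to separate the two requirements: Krylov's estimate for the nondegenerate reflecting diffusion handles the volume ($dr$) driver involving $\mathcal{L}\varphi$ and the possibly discontinuous $f$, while the continuity of $\nabla_x\varphi$ (guaranteed by $p>d+2$) handles the surface ($dk$) driver involving the normal derivative and the continuous $h$. Care must also be taken that the comparison theorem invoked applies to generalized BSDEs driven simultaneously by $dr$ and by the increasing process $k^{\hat{t},\hat{x}}$ under $(A.5)(i)$ and $(A.5)(iii)$; this is available in the framework of \cite{PardZhang,Pardoux}.
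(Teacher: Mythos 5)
Your argument follows essentially the same route as the paper's: argue by contradiction, represent $u_i(s\wedge\tau,X^{\hat{t},\hat{x}}_{s\wedge\tau})$ and $\varphi(s\wedge\tau,X^{\hat{t},\hat{x}}_{s\wedge\tau})$ as solutions of localized generalized BSDEs via the Markov/flow property and the It\^o--Krylov formula, convert the $\lambda$-a.e. inequality on $\frac{\partial\varphi}{\partial t}+\mathcal{L}\varphi+f_i$ into a $dr\times d\mathbb{P}$-a.e. one by Krylov's estimate for the reflecting diffusion, and conclude with the comparison theorem for generalized BSDEs (Theorem 1.4 in \cite{PardZhang}). The one point where you go beyond the paper is the $dk$-driver: the paper states the inequality $\frac{\partial\varphi}{\partial n}+h_i(\cdot,\cdot,u)<-\varepsilon$ only $\lambda$-a.e.\ on $B_\alpha(\hat{t},\hat{x})$ and then invokes comparison without explaining how this is seen by the boundary measure $dk^{\hat{t},\hat{x}}$, which is carried by the Lebesgue-null set $\{X_r\in\partial D\}$; your observation that for $p>d+2$ the parabolic Sobolev embedding makes $\nabla_x\varphi$ continuous, so that the $\lambda$-a.e.\ inequality for the continuous function $\langle\nabla l,\nabla\varphi\rangle+h_i(\cdot,\cdot,u)$ upgrades to an everywhere inequality on the ball and hence holds $dk$-a.e., is exactly the justification this step needs. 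This is a genuine (and welcome) tightening of the published argument rather than a different method.
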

\begin{proof}
We shall prove that $u$ is a $L^p-$viscosity subsolution. Let $\varphi\in W^{1,2}_{p,loc}([0,T]\times \bar{D})$ and let $(\hat{t},\hat{x})\in [0,T]\times \bar{D}$ be a point which is a local maximum of $u_i-\varphi$. We consider a continuous version of $\varphi$ and we assume without loss of generality that
\begin{equation}\label{assum}
u_i(\hat{t},\hat{x})=\varphi(\hat{t},\hat{x}).
\end{equation}
We skip the proof in the case $\hat{x}\in  D$ because of its similitude with that
of $u^n$ in the previous lemma.
We consider the case $\hat{x}\in \partial D$, we suppose that
\begin{eqnarray*}	
\begin{split}
	 & ess\liminf_{(t,x)\to (\hat{t},\hat{x})} \min\left( -\frac{\partial \varphi}{\partial t}(t,x) -\mathcal{L}\varphi(t,x)-
	 f_{i}(t,x,u(t,x)), \right. \\
	& \left.  -\frac{\partial \varphi}{\partial n}(t,x)-h_i(t, x, u(t,x))  \right)> 0.
\end{split}
\end{eqnarray*}
It follows that there exist $\varepsilon,\alpha>0$ such that
\begin{eqnarray}
	&&\frac{\partial \varphi}{\partial t}(t,x) +\mathcal{L}\varphi(t,x)+
	f_{i}(t,x,u(t,x))<-\varepsilon\quad \mbox{and}\\
	&&\frac{\partial \varphi}{\partial n}(t,x)+h_i(t, x, u(t,x))<-\varepsilon \nonumber\quad \lambda- a.e\quad  \mbox{in }\,  B_\alpha(\hat{t},\hat{x}).
\end{eqnarray}
Since $(\hat{t},\hat{x})$ is a local maximum of $u_i-\varphi$ we have
$$u_i(t,x)\le \varphi(t,x)\quad \mbox{in}\quad B_\alpha(\hat{t},\hat{x})$$
Define
$$\tau:=\inf\{s\ge \hat{t}: |X^{\hat{t},\hat{x}}_s-\hat{x}|>\alpha\}\wedge(\hat{t}+\alpha)$$
Since $X^{t,x}$ is a Markov process, we have $\forall r\in [\hat{t},\hat{t}+\alpha], Y^{\hat{t},\hat{x}}_r=u(r,X^{\hat{t},\hat{x}}_r)$. Moreover, the process $(\bar{Y}_s,\bar{U}_s):=(Y^{\hat{t},\hat{x},i}_{s\wedge \tau},1_{[0,\tau]}(s)U^{\hat{t},\hat{x},i}_s)$ for $s\in [\hat{t},\hat{t}+\alpha]$ solves the equation
\begin{eqnarray*}
	\bar{Y}_s&=& u_i(\tau,X^{\hat{t},\hat{x}}_\tau)+\int_{s}^{\hat{t}+\alpha}1_{[0,\tau]}f_i(r,X^{\hat{t},\hat{x}}_r,u(r,X^{\hat{t},\hat{x}}_r))dr-\int_{s}^{\hat{t}+\alpha} \bar{U}_rdM^{X^{\hat{t},\hat{x}}}_r\\
	&&+\int_{s}^{\hat{t}+\alpha}h_i(r,X^{\hat{t},\hat{x}}_r,u(r,X^{\hat{t},\hat{x}}_r)) dk^{\hat{t},\hat{x}}_r.
\end{eqnarray*}
On the other hand, by It\^o-Krylov's formula, see for example Corollary 3.6 in \cite{BahlaliMez}, the process $(\hat{Y}_s,\hat{U}_s):=(\varphi(s\wedge\tau,X^{\hat{t},\hat{x}}_{s\wedge \tau}),1_{[0,\tau]}\nabla\varphi(s,X^{\hat{t},\hat{x}}_s))$ solves the following BSDE
\begin{eqnarray*}
	\hat{Y}_s&=& \varphi(\tau,X^{\hat{t},\hat{x}}_\tau)-\int_{s}^{\hat{t}+\alpha}1_{[0,\tau]}(r)\left(\frac{\partial \varphi}{\partial t}+\mathcal{L}\varphi \right)(r,X^{\hat{t},\hat{x}}_r)dr- \int_{s}^{\hat{t}+\alpha} \hat{U}_rdM^{X^{\hat{t},\hat{x}}}_r\\
	&&-\int_{s}^{\hat{t}+\alpha}1_{[0,\tau]}(r)\frac{\partial \varphi}{\partial n}(r,X^{\hat{t},\hat{x}}_r)dk^{\hat{t},\hat{x}}_r.
\end{eqnarray*}
We consider the set
$$A=\{(t,x)\in B_\alpha(\hat{t},\hat{x}): \frac{\partial \varphi}{\partial t}(t,x) +\mathcal{L}\varphi(t,x)+
f_{i}(t,x,u(t,x))<-\varepsilon\}$$
then $\lambda(A^c)=0$, where $A^c$ is the complement set of $A$. By Krylov's inequality (see \cite{BahlaliMez}, \cite{Melnikov} and \cite{AndSlo}) we get $1_{A^c}(r,X^{\hat{t},\hat{x}}_r)=0\quad dr\times d\mathbb{P}-a.e$, it follows that
$$\mathbb{E}\int_{\hat{t}}^{\hat{t}+\alpha}-1_{[0,\tau]}(r)\left[(\frac{\partial\varphi}{\partial t}+\mathcal{L}\varphi)(r,X^{\hat{t},\hat{x}}_r)+f_i(r,X^{\hat{t},\hat{x}}_r,u(r,X^{\hat{t},\hat{x}}_r)) \right] dr\ge \mathbb{E}(\tau-\hat{t})\varepsilon>0.$$
Then
$$-1_{[0,\tau]}(r)(\frac{\partial\varphi}{\partial t}+\mathcal{L}\varphi)(r,X^{\hat{t},\hat{x}}_r)>1_{[0,\tau]}(r)f_i(r,X^{\hat{t},\hat{x}}_r,u(r,X^{\hat{t},\hat{x}}_r))$$
on a set of $dr\times d\mathbb{P}$ positive measure. Furthermore, by Theorem 1.4 in \cite{PardZhang} we get $\hat{Y}_{\hat{t}}>\bar{Y}_{\hat{t}}$, which contradicts our assumption $(\ref{assum})$.
\end{proof}
\section*{Appendix}
The $S$-topology on the space $\mathcal{D}([0,T],\mathbb{R}^d)$ was introduced by Jakubowski \cite{Jak}. It is weaker than the Skorokhod topology but stronger than the Meyer-Zheng one in \cite{MeyZheng}. We recall here some relevant results about the $ S $-topology in the case of real paths but they can be extend easily to the case of  finite dimensional space $\mathbb{R}^d$. We have the following propositions.
\begin{proposition}

		\item[(i.)] $K\subset \mathcal{D}([0,T],\mathbb{R})$ is relatively $S$-compact if and only if
		\begin{equation}\label{S1}
		\sup_{x\in K}\sup_{t\in [0,T]}|x_t|<+\infty
		\end{equation}
		and for all $a,b \in \mathbb{R}$ such that $a<b$
		\begin{equation}\label{S2}
		\sup_{x \in K}N^{a,b}(x)<+\infty
		\end{equation}
		where $N^{a,b}$ is the usual number of up-crossings given levels $a<b$, that is, $N^{a,b}(x)\ge k$ if one can find numbers $0\le t_1<t_2<...<t_{2k-1}<t_{2k}\le T$ such that $x_{t_{2i-1}}<a$ and $x_{t_{2i}}>b$, $i=1,2,...,k$.
		\item[(ii.)] $x^n$ converges to $x$ in the $S$-topology if and only if $(x^n)$ satisfies (\ref{S1}), (\ref{S2}) and for every subsequence $(n_k)$, one can find a further subsequence $(n_{k_l})$ and a countable subset $Q\subset [0,T]$ such that $x^{n_{k_l}}_t\to x_t$, $t\in [0,T]\backslash Q$.
\end{proposition}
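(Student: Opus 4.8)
The statement is exactly the relative-compactness criterion and the sequential characterization of convergence for the $S$-topology established by Jakubowski, so the cleanest route is to recall his construction rather than reprove everything from scratch; accordingly the plan is to reduce both parts to the two structural facts underlying \cite{Jak}: that a uniform control of the number of up-crossings is a Helly-type compactness condition, and that $S$ is the sequential topology whose convergent sequences are exactly those that are relatively compact and have a pointwise-a.e. identifiable limit. First I would fix notation for the up-crossing functional $N^{a,b}$ and recall that $S$ is a sequential topology, so that throughout it suffices to characterize convergent \emph{sequences} and to use the standard criterion that $x^n\to x$ iff every subsequence has a further subsequence converging to $x$.

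For part (i), necessity is the easy direction: if $K$ is relatively $S$-compact then any sequence in $K$ has an $S$-convergent subsequence, and since both $\sup_t|x_t|$ and $N^{a,b}(\cdot)$ are $S$-lower semicontinuous and finite on $\mathcal{D}$, a failure of \eqref{S1} or of \eqref{S2} along a sequence would force the putative limit to leave $\mathcal{D}$ (an unbounded path, or a path with infinitely many oscillations across some level $[a,b]$), a contradiction. The substantial direction is sufficiency: given \eqref{S1} and \eqref{S2} I would run a Helly-type selection. The uniform bound $\sup_{x\in K}N^{a,b}(x)<\infty$ for all rational $a<b$ plays the role that bounded variation plays in the classical Helly theorem; by a diagonal extraction over a countable dense set of times and over rational levels one obtains a subsequence converging at every point of a co-countable set $[0,T]\setminus Q$ to a limit $x$, and the up-crossing bound guarantees that this limit has finitely many up-crossings across every level, hence extends to a genuine c\`adl\`ag function and that the convergence takes place in $S$.

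For part (ii), the ``only if'' direction follows from (i): an $S$-convergent sequence is in particular relatively $S$-compact, so it inherits \eqref{S1} and \eqref{S2}, and one of the defining features of the $S$-topology is that $S$-convergence forces convergence of the one-dimensional values off a countable set, which yields the required subsequence and co-countable pointwise statement. For the ``if'' direction I would exploit that $S$ is sequential: assume $(x^n)$ satisfies \eqref{S1}, \eqref{S2} together with the subsequence property. By part (i) every subsequence $(x^{n_k})$ is relatively $S$-compact, hence has a further subsequence that is $S$-convergent to some $\tilde x$; along that sub-subsequence the defining pointwise-a.e. convergence, combined with the ``only if'' direction applied to this convergent sub-subsequence, identifies $\tilde x$ with $x$ on a co-countable set, and since both are c\`adl\`ag we get $\tilde x=x$. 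Thus every subsequence has a further subsequence converging to $x$, which is precisely $x^n\to x$ in the sequential topology $S$.

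The main obstacle is the sufficiency half of (i): turning a uniform up-crossing bound into an actual $S$-convergent subsequence with c\`adl\`ag limit. This is the technical heart of Jakubowski's theory, since it requires the precise analysis relating the up-crossing functional to the oscillation structure of paths and to the pairing of $\mathcal{D}$ with bounded-variation test functions that defines $S$; it is exactly the point at which the body of the paper defers to \cite{Jak}, so my argument would invoke the relevant selection and identification lemmas there rather than reconstruct them.
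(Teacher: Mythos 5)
You should first note that the paper does not actually prove this proposition: it appears in the Appendix, introduced by ``We recall here some relevant results about the $S$-topology,'' with everything deferred to Jakubowski \cite{Jak}. So your overall strategy --- reduce both parts to Jakubowski's structural facts and invoke his selection and identification lemmas for the technical core --- is exactly the paper's (non-)approach, and your sketches of the sufficiency half of (i) (Helly-type diagonal extraction over a countable dense set of times and rational levels, with the up-crossing bound guaranteeing a c\`adl\`ag limit) and of part (ii) (subsequence principle in a sequential topology, plus the fact that two c\`adl\`ag paths agreeing off a countable set are equal) are faithful to how \cite{Jak} actually proceeds; indeed the ``if'' direction of your (ii) is precisely the Corollary the paper states right after the proposition.

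The one step you argued yourself rather than cited, however, does not work as stated: necessity in (i) via lower semicontinuity of $\sup_{t}|x_t|$ and $N^{a,b}$. Lower semicontinuity gives the inequality in the wrong direction --- if $x^{n}\to x$ in $S$ it yields $N^{a,b}(x)\le\liminf_n N^{a,b}(x^{n})$, which is vacuous when $N^{a,b}(x^{n})\to\infty$ and in no way forces the limit to ``leave $\mathcal{D}$'': the limit path can be perfectly nice while the approximants oscillate unboundedly, and what must be shown is that such a sequence admits \emph{no} $S$-convergent subsequence at all. The correct mechanism, in \cite{Jak}, goes through the defining dual structure of $S$-convergence: one approximates $x^{n}$ and $x$ uniformly within $\varepsilon<(b-a)/2$ by paths $v^{\varepsilon}_{n}$, $v^{\varepsilon}$ of bounded variation with $dv^{\varepsilon}_{n}\to dv^{\varepsilon}$ weakly-$*$, observes that weak-$*$ convergence forces $\sup_{n}\|dv^{\varepsilon}_{n}\|_{TV}<\infty$, and then bounds
\begin{equation*}
N^{a,b}(x^{n})\;\le\; N^{a+\varepsilon,\,b-\varepsilon}(v^{\varepsilon}_{n})\;\le\;\frac{\|dv^{\varepsilon}_{n}\|_{TV}}{b-a-2\varepsilon},
\end{equation*}
so that \eqref{S1} and \eqref{S2} hold automatically along any $S$-convergent sequence; necessity of the compactness criterion then follows by extracting a convergent subsequence from any sequence violating them and reaching a contradiction with this uniform bound. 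Since this is a recalled background result, the flaw is peripheral to the paper, but if you intend your sketch as a proof outline you should replace the semicontinuity argument by the bounded-variation approximation argument above.
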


\begin{corollary}
If $(x^n)$ is relatively $S$-compact and there exists a countable subset $Q$ such that for every $t\in [0,T]\backslash Q$, $x^n_t\to x_t$, then $(x^n)$ converges to $x$.
\end{corollary}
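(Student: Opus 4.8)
The plan is to deduce the conclusion directly from the characterization of $S$-convergence furnished by the preceding proposition. Its part (ii) asserts that $x^n\to x$ in the $S$-topology exactly when the family $(x^n)$ satisfies \eqref{S1} and \eqref{S2} and, in addition, from every subsequence $(n_k)$ one can extract a further subsequence $(n_{k_l})$ and find a countable set $Q'\subset[0,T]$ such that $x^{n_{k_l}}_t\to x_t$ for all $t\in[0,T]\setminus Q'$. Thus the whole problem reduces to verifying these three ingredients from the two hypotheses at hand, namely relative $S$-compactness of $(x^n)$ and convergence $x^n_t\to x_t$ for every $t$ outside a single fixed countable set $Q$.

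First I would dispatch \eqref{S1} and \eqref{S2}. Applying part (i) of the same proposition to the relatively $S$-compact family $K=\{x^n:n\ge 1\}$ gives immediately $\sup_n\sup_{t\in[0,T]}|x^n_t|<\infty$ and $\sup_n N^{a,b}(x^n)<\infty$ for every pair $a<b$, which are precisely conditions \eqref{S1} and \eqref{S2} for the sequence. Next I would check the subsequence condition, and here the key observation is that our hypothesis is actually stronger than what part (ii) requires: the full sequence already converges pointwise off $Q$. Consequently, for an arbitrary subsequence $(n_k)$ and any fixed $t\notin Q$, the numbers $(x^{n_k}_t)_k$ form a subsequence of the convergent numerical sequence $(x^n_t)_n$, hence converge to the same limit $x_t$. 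It therefore suffices to take the required further subsequence to be $(n_k)$ itself and the exceptional set to be $Q$, the same countable set for every choice of subsequence. With \eqref{S1}, \eqref{S2} and this subsequence condition all established, part (ii) yields $x^n\to x$ in the $S$-topology.

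I do not expect a genuine obstacle: the corollary is really a convenient repackaging of part (ii) in which the somewhat delicate ``for every subsequence there is a further subsequence'' clause is replaced by outright convergence of the whole sequence, which renders that clause automatic. The only point worth a passing remark is that the limiting object $x$ must be a bona fide cadlag path in $\mathcal{D}([0,T],\mathbb{R})$ for the statement ``$(x^n)$ converges to $x$'' to be meaningful; this is either presupposed in the statement or, if one prefers, guaranteed by the relative $S$-compactness, which forces any $S$-limit of a subsequence to belong to $\mathcal{D}$ and to coincide with the pointwise limit $x$ off the countable set. Beyond that remark, the proof is the short three-step verification described above.
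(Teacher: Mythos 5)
Your argument is correct and is exactly the intended derivation: the paper states this corollary without proof, treating it as an immediate consequence of the preceding proposition, and your verification --- part (i) applied to the set $\{x^n\}$ yields \eqref{S1} and \eqref{S2}, while pointwise convergence off the single countable set $Q$ makes the subsequence clause of part (ii) automatic (take the further subsequence to be the subsequence itself) --- is precisely that consequence spelled out. Nothing is missing.
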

We now recall that a sequence of processes $(X^n)_n$ converges weakly to $X$ in the $S$-topology, $X^n \xrightarrow[S]{\ *\ } X$, if for every subsequence $(X^{n_k})$, we can find a further subsequence $(X^{n_{k_l}})$ and a stochastic processes $(Y_l)$ and $Y$ defined on $([0,1],\mathcal{B}_{[0,1]}, \lambda)$, such that the laws of $Y_l$ and $X^{n_{k_l}}$ are the same, $l\in \mathbb{N}$, for each $\omega \in [0,1]$ $Y_l(\omega)$ converges to $Y(\omega)$ in the $S$-topology, and for each $\varepsilon>0$, there exists an $S$-compact subset $K_\varepsilon\subset \mathcal{D}([0,T],\mathbb{R})$ such that
$$\lambda\left(\left\lbrace\omega \in [0,1]:Y_l(\omega)\in K_\varepsilon, l=1,2,... \right\rbrace  \right)>1-\varepsilon. $$

\begin{proposition}
	The following two properties are equivalent
	\begin{itemize}
	\item[(i.)] $(X^n)$ is $S$-tight.
	\item[(ii.)] $(X^n)$ is relatively compact with respect to the convergence $"\xrightarrow[S ]{\ *\ }"$
	\end{itemize}
\end{proposition}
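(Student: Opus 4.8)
The statement is the Prokhorov/Skorohod-representation theorem transplanted to the non-metrizable $S$-topology, so the plan is to run the classical tightness/relative-compactness argument on the pieces where the topology is well behaved. Three ingredients would be fixed at the outset: the deterministic criteria $(\ref{S1})$--$(\ref{S2})$ characterizing relatively $S$-compact subsets of $\mathcal{D}([0,T],\mathbb{R})$; the definition of $\xrightarrow[S]{*}$ through the almost-sure representation on $([0,1],\mathcal{B}_{[0,1]},\lambda)$; and the structural fact (from \cite{Jak}) that on every relatively $S$-compact set the induced $S$-topology is metrizable, which confines the non-metrizability to the ambient space. I would also reformulate $(i)$ through the handier criterion that $(X^n)$ is $S$-tight iff $(a)$ $\sup_n\mathbb{P}(\sup_{t}|X^n_t|>C)\to 0$ as $C\to\infty$ and $(b)$ for every $a<b$, $\sup_n\mathbb{P}(N^{a,b}(X^n)>M)\to 0$ as $M\to\infty$; packing $(a)$ and $(b)$ into a single $S$-compact $K_\varepsilon$ (over a countable family of rational bands, using the monotonicity $N^{a',b'}\ge N^{a,b}$ for $[a',b']\subseteq[a,b]$) is a technical but routine assembly.

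For $(ii)\Rightarrow(i)$ I would argue by contradiction against this criterion, exploiting that the blow-up occurs in a single scalar parameter. If $(a)$ fails there are $\varepsilon_0>0$, levels $C_j\uparrow\infty$ and indices $n_j$ (which we may take to tend to infinity, since each fixed path satisfies $\sup_t|x_t|<\infty$) with $\mathbb{P}(\sup_t|X^{n_j}_t|>C_j)\ge\varepsilon_0$. By $(ii)$ some sub-subsequence satisfies $X^{n_{j_l}}\xrightarrow[S]{*}Y$, and the third defining clause produces a single $S$-compact $K$ with $\mathbb{P}(X^{n_{j_l}}\in K)>1-\varepsilon_0/2$ for all $l$; since $(\ref{S1})$ bounds $\sup_t|x_t|$ by some $C^*$ on $K$, this forces $\mathbb{P}(\sup_t|X^{n_{j_l}}_t|>C^*)<\varepsilon_0/2$, contradicting the lower bound $\varepsilon_0$ once $C_{j_l}>C^*$. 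The failure of $(b)$ is excluded identically, now invoking $(\ref{S2})$, and $(a)$ together with $(b)$ yield $S$-tightness.

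For the substantive direction $(i)\Rightarrow(ii)$ I would start from an arbitrary subsequence and manufacture a $\xrightarrow[S]{*}$-convergent sub-subsequence. Using $S$-tightness I would select, for each $m$, a relatively $S$-compact $K_m$ with $\sup_n\mathbb{P}(X^n\notin K_m)<1/m$. On each $K_m$ the $S$-topology is metrizable, so the laws restricted there may be handled by the classical Skorohod representation; a diagonal extraction over $m$ then yields processes $Y_l$ and $Y$ on $([0,1],\mathcal{B}_{[0,1]},\lambda)$, with each $Y_l$ equal in law to the corresponding $X^{n_{k_l}}$, converging pathwise in the $S$-topology, and all remaining inside a common $S$-compact set off an event of $\lambda$-mass $<\varepsilon$. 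These are precisely the three clauses in the definition of $X^{n_{k_l}}\xrightarrow[S]{*}Y$.

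The main obstacle is exactly that $(\mathcal{D}([0,T],\mathbb{R}),S)$ is neither metrizable nor a standard Borel space, so neither the classical Prokhorov theorem nor the usual Skorohod coupling applies directly; the entire weight of $(i)\Rightarrow(ii)$ rests on transferring these tools to the relatively $S$-compact pieces, where metrizability is restored, and on the up-crossing characterization $(\ref{S1})$--$(\ref{S2})$. This is the genuinely technical content established in \cite{Jak}, the delicate points being the measurability and selection issues in passing to the common probability space $([0,1],\mathcal{B}_{[0,1]},\lambda)$ and the assembly of the criteria $(a)$, $(b)$ into a single compact set.
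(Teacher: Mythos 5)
The paper offers no proof of this proposition: it is stated in the Appendix as a recalled fact, imported from Jakubowski \cite{Jak}, so there is no argument of the paper's own against which to measure yours. Your outline is nevertheless consistent with how the equivalence is actually established in that reference. The direction $(ii)\Rightarrow(i)$, argued by contradiction against the scalar criteria extracted from (\ref{S1})--(\ref{S2}), is sound, including the observation that the third clause in the definition of $\xrightarrow[S]{\ *\ }$ already hands you a single relatively $S$-compact set carrying mass at least $1-\varepsilon$ uniformly along the convergent sub-subsequence, which caps $\sup_t|x_t|$ and each $N^{a,b}$ there. For $(i)\Rightarrow(ii)$, one caveat: the mechanism in \cite{Jak} is not a transplant of Prokhorov's theorem plus the classical Skorohod coupling onto metrizable compact pieces, but rather the general almost-sure Skorokhod representation for subsequences in nonmetric spaces, whose operative hypothesis is the existence of a countable family of $S$-continuous functionals separating points of $\mathcal{D}([0,T],\mathbb{R})$; the metrizability of the trace topology on relatively $S$-compact sets that you invoke is a by-product of that countable separating family rather than an independent input, so your argument should be anchored there. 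You correctly locate the technical weight of the proof but, as you acknowledge, do not discharge it; since the paper itself simply cites \cite{Jak} for this statement, that level of detail is acceptable in context, though as a self-contained proof it remains a sketch.
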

\begin{proposition}
If $(X^n)$ is $S$-tight and there exists a countable subset $Q\subset [0,T]$ such that for every $j\in \mathbb{N}$ and every $t_1,t_2,...,t_j\in [0,T]\backslash Q$
$$(X^n_{t_1},X^n_{t_2},...,X^n_{t_j})\xrightarrow[]{* }(X_{t_1},X_{t_2},...,X_{t_j})$$
where $X$ is a process with trajectories in $\mathcal{D}([0,T],\mathbb{R})$. Then $X^n\xrightarrow[S]{\ *\ } X$.	
\end{proposition}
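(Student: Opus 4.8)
The plan is to deduce the weak convergence from the deterministic (pathwise) criterion recorded in the Corollary above, by passing through the subsequence formulation of $\xrightarrow[S]{\ *\ }$ and a uniqueness-of-limit argument carried out at the level of finite-dimensional distributions.

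First I would exploit the subsequence principle that is built into the definition of $\xrightarrow[S]{\ *\ }$: to obtain $X^n\xrightarrow[S]{\ *\ }X$ it suffices to check that every subsequence $(X^{n_k})$ possesses a further subsequence converging weakly in the $S$-topology to $X$. Fix such a subsequence. Since $(X^n)$ is $S$-tight, it is relatively compact for $\xrightarrow[S]{\ *\ }$ by the equivalence recorded above, so there are a further subsequence $(X^{n_{k_l}})$ and a process $\tilde X$ with paths in $\mathcal D([0,T],\mathbb R)$ with $X^{n_{k_l}}\xrightarrow[S]{\ *\ }\tilde X$. By the definition of weak $S$-convergence this already furnishes processes $(Y_l)$ and $Y$ on $([0,1],\mathcal B_{[0,1]},\lambda)$ with $Y_l\stackrel{law}{=}X^{n_{k_l}}$, $Y\stackrel{law}{=}\tilde X$, $Y_l(\omega)\to Y(\omega)$ in the $S$-topology for every $\omega$, together with the required uniform $S$-compactness of the representing sequence. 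Consequently the sole remaining task is the identification of the limit, namely to prove $\tilde X\stackrel{law}{=}X$; once this is done, the same representation witnesses $X^{n_{k_l}}\xrightarrow[S]{\ *\ }X$.

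For the identification I would compare finite-dimensional distributions from two sides. On one side, weak convergence in the $S$-topology forces convergence of the finite-dimensional distributions off a countable set of times (Remark 2.4 in \cite{Jak}, the very fact used earlier to pass from $M^{t,x,n}\xrightarrow[S]{\ *\ }M^{t,x}$ to $M^{t,x,n}_T\to M^{t,x}_T$): there is a countable $Q_1\subset[0,T)$ such that $(X^{n_{k_l}}_{t_1},\dots,X^{n_{k_l}}_{t_j})\xrightarrow[]{\ *\ }(\tilde X_{t_1},\dots,\tilde X_{t_j})$ for $t_1,\dots,t_j\in[0,T]\setminus Q_1$. On the other side, the standing hypothesis gives $(X^{n_{k_l}}_{t_1},\dots,X^{n_{k_l}}_{t_j})\xrightarrow[]{\ *\ }(X_{t_1},\dots,X_{t_j})$ for $t_1,\dots,t_j\in[0,T]\setminus Q$. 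By uniqueness of weak limits the laws of $(\tilde X_{t_1},\dots,\tilde X_{t_j})$ and of $(X_{t_1},\dots,X_{t_j})$ therefore coincide whenever all the instants lie in the co-countable set $[0,T]\setminus(Q\cup Q_1)$.

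It remains to upgrade this agreement on a dense set of times to equality in law of the full cadlag processes. Since $Q\cup Q_1$ is countable, its complement is dense in $[0,T]$ and every instant $t<T$ is a decreasing limit of points of it; given arbitrary $t_1,\dots,t_j$, choosing $s_i^m\downarrow t_i$ inside $[0,T]\setminus(Q\cup Q_1)$ and invoking the right-continuity of the trajectories yields $(\tilde X_{s_1^m},\dots,\tilde X_{s_j^m})\to(\tilde X_{t_1},\dots,\tilde X_{t_j})$ and the analogous convergence for $X$, almost surely and hence in law, so that passing to the limit in the equality of laws valid at the approximating instants gives equality of the laws of $(\tilde X_{t_1},\dots,\tilde X_{t_j})$ and $(X_{t_1},\dots,X_{t_j})$ for all choices of times, the terminal instant $T$ being covered since convergence of the finite-dimensional laws at $T$ holds on both sides in this framework (the exceptional sets being, as usual, subsets of $[0,T)$). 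As the law of a process with trajectories in $\mathcal D([0,T],\mathbb R)$ is determined by its finite-dimensional distributions, this gives $\tilde X\stackrel{law}{=}X$, whence $X^{n_{k_l}}\xrightarrow[S]{\ *\ }X$ and, by the subsequence principle, $X^n\xrightarrow[S]{\ *\ }X$. The delicate point, which I regard as the main obstacle, is precisely this identification of the limit: it requires transferring the intrinsically pathwise up-crossing/pointwise description of the $S$-topology into a statement about laws, which is why one must route the argument through finite-dimensional distributions, control the two countable exceptional sets simultaneously, and use right-continuity (together with the special status of the endpoint $T$) to recover all times.
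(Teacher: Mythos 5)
The paper offers no proof of this proposition at all: it is recalled from Jakubowski \cite{Jak} in the appendix as background material, so there is no internal argument to compare against. Your proof is a correct reconstruction of the standard uniqueness-of-limit argument behind it: the subsequence principle built into the definition of $\xrightarrow[S]{\ *\ }$, relative compactness from $S$-tightness via the stated equivalence, identification of each subsequential limit $\tilde X$ through finite-dimensional distributions off a countable set, right-approximation through co-countable times using the cadlag property, and the fact that finite-dimensional distributions on a dense set of times together with the endpoint $T$ determine the law on $\mathcal{D}([0,T],\mathbb{R})$. The structure is sound and the conclusion follows.

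Two caveats, neither fatal. First, your key external ingredient --- that $X^{n_{k_l}}\xrightarrow[S]{\ *\ }\tilde X$ forces finite-dimensional convergence outside a single countable set $Q_1$ --- is not Remark 2.4 of \cite{Jak}: that remark only gives $S$-continuity of the projection $\pi_T$, i.e.\ the endpoint alone. The full statement is a separate theorem of \cite{Jak} on finite-dimensional convergence, and it is genuinely nontrivial: in the a.s.\ representation $Y_l(\omega)\to Y(\omega)$ in $S$, the countable exceptional set of times depends on $\omega$, so extracting one non-random $Q_1$ requires an additional argument (for instance, $S$-convergence of paths implies convergence in $dt$-measure, by the pointwise-off-countable-set characterization plus the uniform bound \eqref{S1} and dominated convergence, after which a Meyer--Zheng-type argument identifies $Q_1$ with the fixed discontinuity times of the limit). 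You invoke the fact as a black box with the wrong pointer; the fact is true, but this step is under-justified as written. Second, as literally stated with $Q\subset[0,T]$ the proposition is false: if $T\in Q$, take every $X^n$ equal to the deterministic path $1_{\{T\}}$ and $X\equiv 0$; the finite-dimensional hypothesis holds off $\{T\}$, the constant sequence is trivially $S$-tight, yet $X^n\xrightarrow[S]{\ *\ }X$ would force $X^n_T\to X_T$ in law by $S$-continuity of $\pi_T$, a contradiction. Your parenthetical reading that the exceptional sets lie in $[0,T)$ --- consistent with Theorem \ref{th0} and Remark \ref{rem21} --- is exactly the needed correction, but since the proposition's validity hinges on it, it deserves to be stated explicitly rather than dismissed with ``as usual''.
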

On a probability space $\left(\Omega,\mathcal{F},\P\right)$ with a filtration
${\mathcal F}_{t} $,
let $X$ be an adapted process with paths a.s in
$\mathcal{D}([0,T],\mathbb{R})$. If $X_{t}$ is integrable for all
$t\in[0,T]$, we define the conditional variation of $X$ by
\begin{equation}\label{Conditional var}
\CV_{T}(X)=\sup_{\pi}\displaystyle{
	\sum_{i=1}^{n}\E\left[\left|\E[X_{t_{i+1}}-X_{t_{i}}|
	{\mathcal F}_{t_{i}}]\right|\right]}\,,
\end{equation}
where the supremum is taken over all subdivisions $\pi$
of the interval $[0,T]$.
If $\CV_{T}(X)\! <\!\infty $
then the process $ X $ is called a quasi-martingale. Notice
that for martingales $X$ the quantity $\CV_{T}(X)=0$.
\par\smallskip\noindent
We have the following criterion, for the proof we refer for example to \cite{LeJay} and the references therein.
\begin{theorem}\label{th0}
	Let $ (X^{n})_{n\geq1} $ be a family of stochastic process in
	$ \mathcal{D}([0,T],\mathbb{R}) $. If
	\begin{equation}
	\displaystyle\sup_{n\geq1}\left(\CV_{T}(X^{n})+\E\left[
	\sup_{0\leq s\leq T}|X^{n}_{s}|\right]\right)<\infty\, , \label{c4}
	\end{equation}
	then the sequence $\left(X^{n}\right)_{n\geq1}$ is $S$-tight and
	there exists a subsequence
	$\left(X^{n_{k}}\right)_{k\geq1}$ of $\left(X^{n}\right)_{n\geq1}$,
	a process $X$ belonging to
	$\mathcal{D}([0,T],\mathbb{R})$, and a countable subset $Q\subset [0,T)$
	such that for every $j\geq 1$ and for any finite subset
	$\left\{ t_{1},\ldots, t_{j}\right\}$ of
	$[0,T]\setminus Q $ the following convergence is true:
	$$\left(X_{t_{1}}^{n_{k}},\ldots, X_{t_{j}}^{n_{k}}\right)
	\xrightarrow[]{\ *\ }
	\left(X_{t_{1}},\ldots, X_{t_{j}}\right)\,\,\,\, \text{as}\,\, k\to\infty\,.$$
\end{theorem}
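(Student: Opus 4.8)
The hypothesis says that each $X^n$ is a quasimartingale whose conditional variation $\CV_T(X^n)$ is bounded uniformly in $n$, together with a uniform bound on $\E[\sup_{s}|X^n_s|]$. The plan is to split the argument into two parts: (I) verify $S$-tightness by checking the relative $S$-compactness criterion of the Appendix on events of large probability, and then (II) extract a subsequence and upgrade weak $S$-convergence to convergence of the finite dimensional distributions off a countable set.

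For part (I), recall the $S$-compactness criterion from the Appendix: a set $K\subset\mathcal{D}([0,T],\R)$ is relatively $S$-compact as soon as $\sup_{x\in K}\sup_t|x_t|<\infty$ and $\sup_{x\in K}N^{a,b}(x)<\infty$ for every $a<b$. I would therefore build, for each $\varepsilon>0$, an $S$-compact set $K_\varepsilon$ with $\inf_n\P(X^n\in K_\varepsilon)>1-\varepsilon$. The uniform supremum bound gives, via Markov's inequality, $\sup_n\P(\sup_t|X^n_t|>R)\le R^{-1}\sup_n\E[\sup_t|X^n_t|]\to0$ as $R\to\infty$, controlling the first condition. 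The decisive ingredient is a uniform bound on the expected number of up-crossings. Here I would invoke the quasimartingale up-crossing inequality: using the Rao decomposition $X^n=M^n+V^n$, with $M^n$ a martingale and $V^n$ predictable of finite variation whose expected total variation is dominated by $\CV_T(X^n)$, Doob's classical up-crossing inequality applied to the resulting supermartingale pieces yields an estimate of the form
\[
(b-a)\,\E\!\left[N^{a,b}(X^n)\right]\le \CV_T(X^n)+\E|X^n_T|+|a|.
\]
Combined with the hypothesis this gives $\sup_n\E[N^{a,b}(X^n)]<\infty$, hence $\sup_n\P(N^{a,b}(X^n)>m)\to0$ as $m\to\infty$. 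Fixing an enumeration $(a_j,b_j)$ of rational pairs with $a_j<b_j$ and choosing $R$ and thresholds $m_j$ so that the excluded probabilities sum below $\varepsilon$, the set $K_\varepsilon=\{x:\sup_t|x_t|\le R,\ N^{a_j,b_j}(x)\le m_j\ \forall j\}$ is relatively $S$-compact and carries mass at least $1-\varepsilon$ uniformly in $n$. This is precisely $S$-tightness.

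For part (II), the equivalence stated in the Appendix ($S$-tight $\iff$ relatively compact for $\xrightarrow[S]{\ *\ }$) furnishes a subsequence $X^{n_k}\xrightarrow[S]{\ *\ }X$ with $X$ having trajectories in $\mathcal{D}([0,T],\R)$. It then remains to pass from $S$-convergence to finite dimensional convergence. I would use the fact that the evaluation maps $\pi_t\colon x\mapsto x_t$ are continuous for the $S$-topology at all $t$ outside a countable exceptional set $Q\subset[0,T)$ determined by the limit law (the fixed times of discontinuity of $X$); the continuous mapping theorem then yields convergence of the joint laws of $(X^{n_k}_{t_1},\dots,X^{n_k}_{t_j})$ for every finite $\{t_1,\dots,t_j\}\subset[0,T]\setminus Q$, which is the asserted convergence.

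The main obstacle is the uniform up-crossing estimate in part (I): everything else reduces to the Appendix criteria and routine weak-convergence arguments, whereas passing from the conditional-variation bound to a genuine bound on $\E[N^{a,b}(X^n)]$ requires the quasimartingale (Rao) decomposition into supermartingale components and a careful application of Doob's up-crossing inequality, with the constants tracked so that the resulting bound is uniform in $n$.
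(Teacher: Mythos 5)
The paper itself gives no proof of Theorem~\ref{th0}: it is quoted from the literature with a pointer to \cite{LeJay} and, ultimately, to Jakubowski \cite{Jak} and Meyer--Zheng \cite{MeyZheng}. Your two-part architecture (uniform up-crossing control giving $S$-tightness, then extraction and finite-dimensional convergence off a countable set) is exactly the architecture of that standard proof, and the inequality $(b-a)\,\E[N^{a,b}(X^n)]\le \CV_T(X^n)+\E|X^n_T|+|a|$ that you isolate as the decisive ingredient is precisely the Meyer--Zheng lemma. Two steps, however, are not justified as written. First, the derivation of the up-crossing bound via the Rao decomposition $X^n=M^n+V^n$ and ``Doob applied to the supermartingale pieces'' does not go through directly: the number of up-crossings of a difference of two supermartingales is not bounded by the sum of their individual up-crossing numbers, so the constants do not assemble the way you suggest. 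The standard (and cleaner) route is a direct discrete-time argument: for a partition $t_0<\dots<t_N$ one runs the predictable-transform proof of Doob's inequality and bounds $\E\bigl[\sum_i H_i(X^n_{t_{i+1}}-X^n_{t_i})\bigr]$ by $\sum_i\E\bigl|\E[X^n_{t_{i+1}}-X^n_{t_i}\mid\mathcal{F}_{t_i}]\bigr|\le\CV_T(X^n)$, with no decomposition needed; one then passes to the continuous-time count by refining partitions.

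Second, in part (II) the appeal to the continuous mapping theorem is based on a false premise: for $t<T$ the projection $\pi_t$ is \emph{not} $S$-continuous, not even outside the fixed discontinuity times of the limit (only $\pi_T$ is continuous, cf.\ Remark~\ref{rem21}). What is true, and what the definition of $\xrightarrow[S]{\ *\ }$ recalled in the Appendix is designed to deliver, is an almost sure Skorokhod-type representation: one gets copies $Y_l$, $Y$ on $([0,1],\lambda)$ with $Y_l(\omega)\to Y(\omega)$ in $S$ for each $\omega$, hence $Y_l(\omega)_t\to Y(\omega)_t$ for $t$ outside a countable set $Q_\omega$ which a priori depends on $\omega$ and on the further subsequence. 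Producing a single countable $Q\subset[0,T)$ valid for the convergence in law of all finite-dimensional marginals requires an additional argument (e.g.\ taking $Q$ to contain the times $t$ with $\P(X_{t-}\neq X_t)>0$ and using a Fubini/diagonal selection); this is the genuinely nontrivial content of part (II) and cannot be replaced by a continuity claim for $\pi_t$. With these two repairs your sketch becomes the standard proof.
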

\begin{remark}\label{rem21}
	Note that $T$ is not in the countable subset $Q$. More precisely the
	projection $\pi_{T}\,:\, \mathcal{D}([0,T],\mathbb{R})\rightarrow \R$, which assigns
	to $x$ the value $x(T)$, is continuous with respect to the $S$-topology
	({\rm cfr} Remark 2.4. p.8 in \cite{Jak}).
\end{remark}

\end{document}